\documentclass[11pt]{article}

\usepackage[a4paper,margin=1in]{geometry}
\usepackage{amsmath, amssymb, amsfonts, amsthm, mathtools}
\usepackage{enumitem}
\usepackage{graphicx}
\usepackage{subcaption}
\usepackage{tikz}
 \usetikzlibrary{calc}
 \usetikzlibrary{positioning}
\usepackage{booktabs}
\usepackage{hyperref}
\usepackage{xcolor}
\usepackage{tabularx}
\usepackage{pifont}
\usepackage{dsfont}

\newcommand{\cmark}{\ding{51}}
\newcommand{\xmark}{\ding{55}}

\usepackage{titlesec}

\titleformat{\subsection}[runin]
  {\bfseries}
  {\thesubsection}
  {0.5em}
  {}

\hypersetup{
    colorlinks=true,
    linkcolor=blue,
    citecolor=blue,
    urlcolor=blue
}

\theoremstyle{plain}
\newtheorem{theorem}{Theorem}[section]
\newtheorem{lemma}[theorem]{Lemma}
\newtheorem{proposition}[theorem]{Proposition}

\newtheorem{conjecture}[theorem]{Conjecture}

\theoremstyle{definition}

\newtheorem{remark}[theorem]{Remark}

\numberwithin{equation}{section}

\title{Models for species evolution with random deaths}

\author{
    Peter Braunsteins\thanks{The University of New South Wales, Sydney NSW 2052, Australia. \texttt{ p.braunsteins@unsw.edu.au}}
    \and
    Joseph Rolfe\thanks{The University of New South Wales, Sydney NSW 2052, Australia. \texttt{joseph.rolfe@student.unsw.edu.au}}
}

\date{\today}

\begin{document}

\maketitle


\begin{abstract}

We consider three discrete-time models for species evolution. In all three models, at each time step $n$, with probability $p$, a species is born with an independent $\text{Uniform}[0,1]$ fitness value and, with probability $1-p$, a species is killed. The mechanism for selecting which species to kill when a death occurs distinguishes the three models: in the first model, the least fit species is always killed; in the second model, with probability $r$, the least fit species is killed and, with probability $1-r$, a species chosen uniformly from the population is killed; in the third model, with probability $r$, the least fit species is killed and, with probability $1-r$, the species with the largest fitness less than an independent $\text{Uniform}[0,1]$ outcome is killed. We establish asymptotic results as $n \to \infty$ for the three models. These results demonstrate that small changes to the death mechanism of the model can lead to vastly different asymptotic behaviour. To prove our results we develop a novel approach that relies on coupling arguments and mean-field limits. 
\end{abstract}

\section{Introduction}

\label{ch:introduction}

Raup \cite{Paleontological Extinction} estimates that more than 99.9\% of all animal species that have ever existed are now extinct. He also observes that although some extinctions are selective, many appear to be effectively random, in the sense that survivors exhibit no clear superiority over those that disappear. In this paper, we compare three models for species evolution with different random extinction mechanisms: one that always eliminates the least fit species, and two others in which a species other than the least fit may be eliminated. 
By comparing the asymptotic behaviour of the three models, our goal is
to show that small changes in the death mechanism of the model can lead to vastly different behaviour. 

We first consider the {\em Guiol–Machado–Schinasi (GMS)} process introduced in \cite{Simple Process}. This process starts with no species alive. At each time step $n \in \mathbb{N}$, with probability $p$, a new species is born and assigned an independent $\mathrm{Uniform}[0,1]$ fitness value; otherwise, with probability $q:=1-p$, the least fit species dies. The key results in the context of this paper, established in \cite{Simple Process,Bessel Link}, are the following. Suppose $p > q$:
\begin{enumerate}[label=(\arabic*),itemsep=1em]
\item[(1-a)] If $f < f^*_c := \frac{q}{p}$ then, with probability 1, there are infinitely many times $n$ such that the number of species with fitness less than $f$ is 0, whereas if $f > f^*_c$ then, with probability 1, the number of species with fitness less than $f$ increases linearly in $n$ as $n \to \infty$.
\item[(1-b)] The distribution of the fitness values of the species alive at time $n$ converges to the uniform distribution on $(f^*_c,1)$ as $n \to \infty$.
\item[(1-c)] Species with fitness $f > f^*_c$ have a strictly positive probability of surviving indefinitely.
\end{enumerate}
These results, which are restated rigorously in Section \ref{sec:GMS}, highlight the importance of the critical threshold $f^*_c=\frac{q}{p}$. 
For $p>q$ and at a large time $n$, the number of species alive is approximately $(p-q)n$ and the total number of species ever born is approximately $pn$; thus approximately $\left( \frac{p-q}{p} \right)\times 100 \%$ of all species born up to a large time $n$ are still alive at time $n$. This means Raup's estimate that over $99.9\%$ of all species the have ever existed are now extinct aligns with $p \downarrow \frac{1}{2}$, in which case, $f^*_c \uparrow 1$. Hence, from (1-b), the GMS model suggests that the species alive when $n$ is large (i.e., species alive today) have fitness very close to 1 (i.e., amongst the fittest that have ever existed). This is illustrated in the left panels of Figure \ref{fig:intro_gms_process_behaviour}, where $p=0.50025$, so that approximately $99.9\%$ of species that have existed are extinct at time $n=10^7$, and $f^*_c \approx 0.999$, which implies that the species alive at a large time $n$ are overwhelmingly in the top $0.1\%$ of fitness values.

\begin{figure}[h!]
\centering
\includegraphics[width=0.9\linewidth]{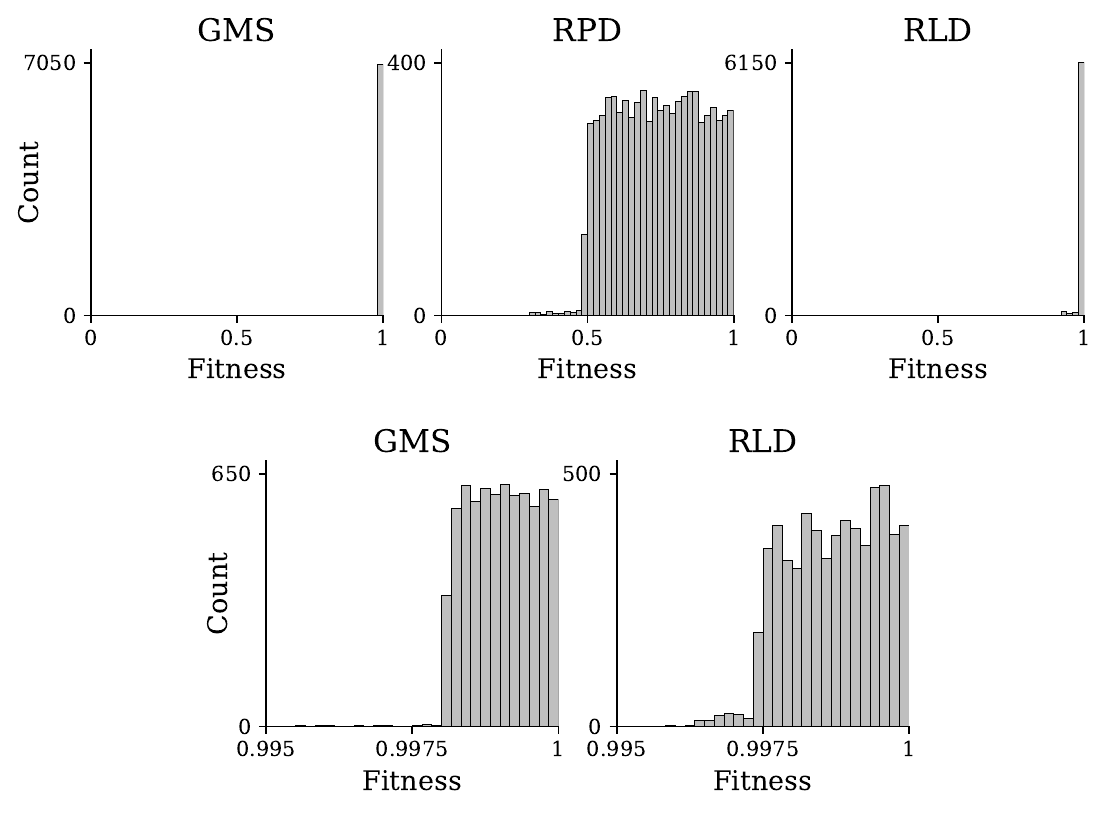}
\caption{Fitness histograms for the GMS (left), RPD, and RLD (right) models when $n=10^7$, $p=0.50025$, and $r=0.5$, for fitness values in $[0,1]$ (top row) and $[0.99,1]$ (bottom row).}
\label{fig:intro_gms_process_behaviour}
\end{figure}

The second model we consider reflects the inherently random nature of extinction. It was first introduced in \cite{Influenza Process} and we refer to it as the \emph{random population deaths (RPD)} process. It extends the death mechanism in the GMS process as follows:
\begin{itemize}
\item At each death event, with probability $r$, the least fit species dies; otherwise, with probability $1-r$, a species is chosen uniformly at random from the population to die.
\end{itemize}
The only established result for the RPD process when $p>q$ concerns the maximal fitness among all species alive at time $n$, which converges to 1 as $n \to \infty$ \cite[Theorem 4]{Influenza Process}. It is also considered in \cite[Remark 2.5]{Actual Process}, where it is conjectured that there is critical threshold for the RPD process that plays a similar role to $f^*_c$ in the GMS process, and that this threshold is $f^\dagger_c:=\tfrac{qr}{p}$; however, the authors were unable to prove this result and do not mention that the formula does not hold when $p < q$. The lack of results on the RPD process is perhaps because, as we explain in Section \ref{sec:Ln-RPD}, the proofs require techniques that fundamentally differ from those used for the GMS process. We develop a new proof approach that uses ideas from interacting particle systems \cite{L85} such as coupling arguments and mean-field limits.
Suppose $p>q$, we show that:
\begin{enumerate}[label=(\arabic*),itemsep=1em]
\item[(2-a)] If $f<f^\dagger_c:=\tfrac{qr}{p}$ then, with probability 1, there are infinitely many times $n$ such that the number of species with fitness less than $f$ is 0, whereas if $f > f^\dagger_c$ then, with probability 1, the number of species with fitness less than $f$ increases linearly in $n$ as $n \to \infty$.
\item[(2-b)] The distribution of the fitness values of the species alive at time $n$ converges to the uniform distribution on $(f^\dagger_c,1)$ as $n \to \infty$.
\item[(2-c)] Species with \textit{any} fitness $f$ eventually die with probability 1.
\end{enumerate}
These results are restated rigorously in Section \ref{sec:RPD-KR}. In contrast to the GMS model, observe that when $p \downarrow \frac{1}{2}$, we have $f^\dagger_c \uparrow r$. In the centre top panel of Figure~\ref{fig:intro_gms_process_behaviour}, $p=0.50025$ so we again have that approximately $99.9\%$ of species that have existed are extinct at time $n=10^7$, and we take $r=0.5$. In this case $f^\dagger_c = 0.4995$, so that in the long term the fitness distribution of the species alive for large $n$ is uniform on the top $50.05\%$ of fitness values. Consequently, unlike in the GMS model, the species alive at a large time $n$ \emph{are} \emph{not} amongst the fittest that have ever existed.

The third model we consider illustrates that contrasting behaviour may arise when the random death mechanism is modified. We refer to this process as the \emph{random location deaths} (RLD) process, which has not previously appeared in the literature. It extends the death event of the GMS model as follows:
\begin{itemize}
    \item At each death event, with probability $r$, the least-fit species dies; otherwise, with probability $1-r$, independently sample $U_n \sim \mathrm{Uniform}[0,1]$ and delete the species whose fitness is the largest value strictly less than $U_n$ (if no such species exists, the least-fit species dies).
\end{itemize}
As we explain in Section \ref{sec:RLD-Ln}, the RLD process is more analytically challenging than the RPD process. Consequently, in (3-a) and (3-b) below, the results in \textit{italic} are conjectured; the reasoning behind these conjectures is explained in Section \ref{sec:RLD-Ln}.
Suppose $p>q$:
\begin{enumerate}[label=(\arabic*),itemsep=1em]
    \item[(3-a)] If $f<f^\ddagger_c:=\frac{qr}{p-q(1-r)}$ then, with probability 1, there are infinitely many times $n$ such that the number of species with fitness less than $f$ is 0, \textit{whereas if $f>f^\ddagger_c$ then, with probability 1, the number of species with fitness less than $f$ increases linearly in $n$ as $n \to \infty$.}
    \item[(3-b)] \textit{The distribution of the fitness values of the species alive at time $n$ converges to the uniform distribution on $(f^\ddagger_c,1)$ as $n \to \infty$.}
    \item[(3-c)] Species with any fitness $f<1$ eventually die with probability 1.
\end{enumerate}
These results are restated rigorously in Section \ref{sec:RLD-KR}. Like the GMS process but unlike the RPD process, observe that when $p \downarrow \frac{1}{2}$, we have $f^\ddagger_c \uparrow 1$. In the right panels of Figure~\ref{fig:intro_gms_process_behaviour}, we again take $r=0.5$ and $p=0.50025$. In this case, $f^\ddagger_c = 0.998$, so that the long term fitness distribution of the species alive for large $n$ is uniform on the top $0.2\%$ of fitness values. That is, like the GMS process but unlike the RPD process, the species alive at time $n$ \emph{are} amongst the fittest that have ever existed. In Remark \ref{rem:no_conj} we explain that this observation is supported theoretically by the first part of (3-a), i.e., it does not rely on any conjectures. We provide a more detailed comparison between the GMS, RPD, and RLD processes in Section \ref{sec:discussion}.

Our work belongs to the literature on extensions of the GMS model. The GMS model was introduced in \cite{Simple Process} because it displays similar properties to the Bak-Sneppen model \cite{BakSneppen1993} which is more challenging to analyse mathematically (c.f. \cite{MZ03}). It has been extended to allow for continuous time \cite{Bessel Link}, multiple births/deaths simultaneously \cite{B13,MS12} and discrete fitness distributions \cite{Actual Process}. Results similar to (1-a)--(1-c) have been established for these models, and follow primarily through extensions of the arguments for the GMS process outlined in Section \ref{sec:GMS_Ln}. Different questions, such as the distribution of the maximal fitness \cite{StrongestFitness,Phylogenetic Trees}, and finer results such as laws of iterated logarithms \cite{BMR11} have also been addressed, but we do not consider these here. Another variant of the GMS process, which introduces a mechanism whereby, with some probability, a new species inherits the fitness of an existing one, is considered in \cite{Quasispecies Model, Subspecies Model}; however, we note that in these models questions concerning $f_c$ reduce to those of the GMS model with a modified birth probability. In addition, \cite{EventBasedExtinction} introduces a \emph{mass-death} mechanism, in which all species with fitness below a random threshold (sampled from a fixed distribution) are simultaneously removed; however, this modification leads to fundamentally different behaviour in the sense that (1-a)--(1-c) do not naturally generalise to this setting. 

The paper is organised as follows. The GMS, RPD, and RLD processes are considered in Sections \ref{sec:GMS}, \ref{sec:RPD}, and \ref{section:rld_process} respectively. Section \ref{sec:discussion} compares the three models and proposes future directions for research. Finally, Section \ref{sec:proofs} contains the proofs of our results.

\paragraph{Notation:} The following notation is used to describe the three models. Let $\mathcal{M}_n$ be the set of fitness values of species alive at time $n$, and let $M_n$ denote the number of species alive at time $n$, i.e., $M_n=|\mathcal{M}_n|$, where $|\cdot|$ denotes the cardinality. For a fixed $f \in [0,1]$, let $\mathcal{L}_n(f)$ be the set of fitness values of all species alive at time $n$ with fitness less than $f$, and let $L_n(f)\equiv |\mathcal{L}_n(f)|$. For example, if at time $n=5$ we have $\mathcal{M}_5=\{0.1, 0.6, 0.8\}$, then $M_5=3$, $\mathcal{L}_5(0.7)=\{0.1,0.6\}$, and $L_5(0.7)=2$. Let 
\[
\mathcal{B}(f) = \{n \geq 0: L_n(f) = 0\}
\]
be the set of times at which $L_n(f)$ is 0, and denote the elements of $\mathcal{B}(f)$ by $0=B_0<B_1 < B_2 <\dots$. Finally, we note that in all three models $\{M_n : n \in \mathbb{N}_0\}$ is a random walk with a reflecting barrier at 0 that increases by 1 with probability $p$ and decreases by 1 with probability $q$.

\section{The GMS Model}\label{sec:GMS}

\subsection{Model.} The GMS process is characterised by the following dynamics: at time $n = 0$ no species exist, and at each time $n \geq 1$: 
\begin{itemize}
    \item With probability $p$, a new species is born and assigned an independent fitness value $f_n \sim \text{Uniform}[0,1]$. Otherwise,
    \item with probability $q =1 - p$, the species with the lowest fitness is eliminated (if no species are alive, nothing happens). 
\end{itemize} 

 \subsection{Key results.}\label{sec:GMS-KR}
The properties (1-a)--(1-c) of the GMS model that were described in Section \ref{ch:introduction} are formalised in the following theorems. They are written in a manner that helps us to compare them with the corresponding results in Sections \ref{sec:RPD} and \ref{section:rld_process}. 
In preparation, recall the notation introduced at the end of Section \ref{ch:introduction}.

\begin{theorem}[{\cite[Theorem 1a]{Simple Process}}]
Let $f^*_c = \frac{q}{p}$ and suppose $p>q$.
    \begin{itemize}
        \item[(i)] If $f < f^*_c$, then $|\mathcal{B}(f)|= \infty$ a.s. and $\mathbb{E}[B_{i+1} - B_i] = \frac{f^*_c}{f^*_c-f}<\infty$ for all $i$.
        \item[(ii)] If $f = f^*_c$, then $|\mathcal{B}(f)|= \infty$ a.s. and $\mathbb{E}[B_{i+1} - B_i]=\infty$ for all $i$. 
        \item[(iii)] If $f > f^*_c$, then $|\mathcal{B}(f)| < \infty$ a.s..
    \end{itemize}
    \label{theorem:gms_theorem}
\end{theorem}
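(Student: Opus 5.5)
The key observation is that in the GMS process, $L_n(f)$ is itself a Markov chain, so the theorem reduces to standard facts about a reflected random walk. The least fit species is always killed. So whenever $L_n(f)\ge 1$, a death event removes a species with fitness less than $f$, and $L_n(f)$ decreases by one. A birth event increases $L_n(f)$ by one exactly when the new fitness $f_n$ is below $f$, which happens with probability $pf$. Hence, on $\{L_n(f)\ge1\}$, the process $L_n(f)$ makes the moves
\[
+1 \text{ w.p. } pf,\qquad -1 \text{ w.p. } q,\qquad 0 \text{ w.p. } p(1-f).
\]
At $L_n(f)=0$ it moves to $1$ with probability $pf$ and otherwise stays at $0$. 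Deaths of species with fitness at least $f$, or births above $f$, do not change it. By independence of the fitness values and the coin flips, $\{L_n(f)\}$ is a time-homogeneous birth--death chain on $\mathbb{N}_0$, reflected at $0$. Moreover, $pf<q$ if and only if $f<q/p=f^*_c$.

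With this reduction the three cases are classical. For (iii), with $f>f^*_c$ the drift $pf-q>0$, so the chain is transient. By the strong law applied to the increments, $L_n(f)\to\infty$ linearly a.s., and thus $L_n(f)=0$ for only finitely many $n$. For (ii), the drift is zero, so the embedded walk is a recurrent lazy simple random walk. The chain returns to $0$ infinitely often a.s. (null recurrence), and by the strong Markov property the gaps $B_{i+1}-B_i$ are i.i.d. Their mean is infinite because the expected hitting time of $0$ from $1$ for a driftless walk is infinite.

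For (i), with negative drift the chain is positive recurrent, so $|\mathcal B(f)|=\infty$ a.s. The gaps $B_{i+1}-B_i$ are again i.i.d. by the strong Markov property at the times $B_i$, since $L_{B_i}(f)=0$. To compute the mean, I would decompose on the first step from $0$. With probability $1-pf$ the gap equals $1$. With probability $pf$ the chain jumps to $1$, and the gap is then $1+T$, where $T$ is the hitting time of $0$ from $1$. By Wald's identity applied to the walk with drift $-(q-pf)$, $\mathbb E[T]=1/(q-pf)$. Therefore
\[
\mathbb E[B_{i+1}-B_i]=1+\frac{pf}{q-pf}=\frac{q}{q-pf}=\frac{f^*_c}{f^*_c-f}.
\]
As a check, this equals $1/\pi(0)$ for the stationary distribution of the chain.

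The only point needing care is the first step: verifying rigorously that $L_n(f)$ is Markov. This requires the least-fit species to lie in $\mathcal L_n(f)$ whenever $L_n(f)\ge1$, which is immediate, and each birth to be below $f$ independently of the past with probability $f$. This is precisely where the argument uses the GMS death rule, and it is what fails for the RPD and RLD processes. The rest is routine random walk theory.
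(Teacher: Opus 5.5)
Your proposal is correct and follows essentially the same route as the paper, which reduces the theorem to classifying the reflected birth--death chain $L_n(f)$ (up-step $pf$, down-step $q$) as positive recurrent, null recurrent, or transient. Your first-step/Wald computation $\mathbb{E}[B_{i+1}-B_i]=q/(q-pf)$ is equivalent to the return-time identity $1/\pi(0)$ that the paper uses for the analogous computation in the RPD proof, and which you also give as a check.
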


\begin{theorem}[{\cite[Theorem 1b]{Simple Process}}]
Let $f^*_c = \frac{q}{p}$ and suppose $p>q$. Then for $f^*_c \leq a < b \leq 1$,
        \[
         \frac{L_n(b) - L_n(a)}{n} \xrightarrow{a.s.} (p-q) \left(\frac{b-a}{1-f^*_c} \right), \qquad \text{as } n \to \infty.
        \]
        \label{theorem:gms_rn_distribution}
\end{theorem}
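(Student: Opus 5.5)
The plan is to prove the statement first for a single $a\ge f^*_c$ with $b=1$, and then obtain the general case by subtraction. Since $L_n(1)=M_n$ almost surely and $M_n$ is a reflected random walk with drift $p-q>0$, the strong law gives $M_n/n\to p-q$ a.s. It therefore suffices to show that for every $f\in[f^*_c,1]$,
\[
\frac{L_n(1)-L_n(f)}{n}\xrightarrow{a.s.}(p-q)\frac{1-f}{1-f^*_c}=p(1-f).
\]
The identity on the right uses $(p-q)/(1-f^*_c)=p$. Taking differences of this limit at $a$ and $b$ then gives $(p-q)\frac{b-a}{1-f^*_c}$.

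\textbf{Key observation.} Fix $f> f^*_c$. By Theorem~\ref{theorem:gms_theorem}(iii), $|\mathcal{B}(f)|<\infty$ a.s., so there is a random time $T$ after which $L_n(f)\ge1$ for all $n\ge T$. After $T$, every death removes the least fit species, and that species has fitness $<f$. Hence after $T$ no species with fitness $\ge f$ ever dies, and $M_n-L_n(f)$ simply counts the births in $[f,1]$. Writing $\xi_k$ for the indicator that a birth with fitness in $[f,1]$ occurs at step $k$, we get for $n\ge T$
\[
M_n-L_n(f)=M_T-L_T(f)+\sum_{k=T+1}^n\xi_k .
\]
The $\xi_k$ are i.i.d.\ Bernoulli$(p(1-f))$, so the strong law gives $(M_n-L_n(f))/n\to p(1-f)$ a.s. Equivalently, $L_n(f)/n\to (p-q)-p(1-f)=pf-q$, which is positive precisely when $f>f^*_c$.

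\textbf{Boundary case.} For $f=f^*_c$ we cannot use Theorem~\ref{theorem:gms_theorem}(iii) directly, so we sandwich. For $\epsilon>0$, monotonicity of $L_n(\cdot)$ gives
\[
M_n-L_n(f^*_c+\epsilon)\le M_n-L_n(f^*_c)\le M_n .
\]
Dividing by $n$, the lower bound tends to $p(1-f^*_c-\epsilon)$ and the upper bound to $p-q=p(1-f^*_c)$. Letting $\epsilon\downarrow0$ along a countable sequence yields the limit $p(1-f^*_c)$. The same sandwich, with $L_n(b)-L_n(a)$ expressed through the $b=1$ quantities, covers any $a=f^*_c$.

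The only substantive input is Theorem~\ref{theorem:gms_theorem}(iii), i.e.\ that $L_n(f)$ is eventually positive for every $f>f^*_c$; given that, the argument is routine. Within the argument itself, the main care is in the boundary case. An alternative would be to show $L_n(f^*_c)=o(n)$ directly, but that also reduces to the sandwich, and the sandwich requires only countably many null sets.
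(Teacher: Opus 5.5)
Your proof is correct, but it takes a different route from the paper's sketch. The paper works with $L_n(f)$ itself. It uses the reflection representation \eqref{eqn:decomp}, $L_n(f)\stackrel{d}{=}S_n-\inf_{k\le n}S_k$ with i.i.d.\ increments, applies the SLLN to $S_n$, and shows $\frac1n\inf_{k\le n}S_k\to0$ when the drift $pf-q$ is positive. This gives $L_n(f)/n\to pf-q$, and the result follows by subtraction. You instead take transience (Theorem \ref{theorem:gms_theorem}(iii)) as input and use the least-fit death rule: species above $f$ are eventually never killed, so the complementary count $M_n-L_n(f)$ is eventually a pure Bernoulli sum.

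At bottom both arguments rest on the same fact. $|\mathcal{B}(f)|<\infty$ says exactly that the running infimum of $S$ eventually freezes, which is what step (3) of the sketch uses. So neither route is more elementary in its essential input.

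Your version has three advantages:
\begin{enumerate}
\item[(a)] It is pathwise, so you do not need to read \eqref{eqn:decomp} as an identity in law of whole trajectories to transfer an almost sure statement.
\item[(b)] It shows transparently that the limiting density on $[f^*_c,1]$ is just the birth rate $p$.
\item[(c)] It handles the endpoint $a=f^*_c$ explicitly via the monotone sandwich, whereas the sketch states step (3) only for $f>f^*_c$ and leaves that endpoint implicit.
\end{enumerate}
The paper's route, in turn, is self-contained rather than relying on Theorem \ref{theorem:gms_theorem}. The same representation with nonpositive drift also yields $L_n(f)/n\to(pf-q)^+$ for every $f$.

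One small point of presentation: $T$ is a last-exit time rather than a stopping time, so the $\xi_k$ with $k>T$ are not i.i.d.\ given $T$. This is harmless, because you apply the SLLN to $\sum_{k\le n}\xi_k$ and discard the almost surely finite prefix $\sum_{k\le T}\xi_k\le T$. It is worth saying so explicitly.
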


\begin{theorem}[{\cite[Theorem 2.1]{Bessel Link}}]
Let $f^*_c = \frac{q}{p}$, $f\in [0,1]$, and suppose $p>q$. If the number of species currently alive with fitness below $f$ is $k$, then the probability that a species with fitness $f$ survives forever is
\[
\begin{cases}
0 \qquad &\text{if } f < f^*_c, \\
1 - \left(\tfrac{f^*_c}{f}\right)^{k+1} \qquad &\text{if } f \geq f^*_c.
\end{cases}
\]
    \label{theorem:gms_survival}
\end{theorem}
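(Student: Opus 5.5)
A species of fitness $f$ dies exactly when it becomes the least fit species and a death event then occurs. So it survives forever if and only if the number of species with fitness below $f$ never drops below zero in the sense of the next death after reaching zero. The plan is therefore to follow the count of species strictly below $f$, with $k$ of them present initially, and reduce the question to a random walk hitting problem.

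While the species of fitness $f$ is alive, let $X_n$ be the number of alive species with fitness less than $f$. Each step changes $X_n$ as follows:
\begin{itemize}
\item with probability $pf$, a birth lands below $f$, and $X_n$ increases by $1$;
\item with probability $p(1-f)$, a birth lands above $f$, and $X_n$ is unchanged;
\item with probability $q$, a death occurs; if $X_n\ge 1$ the least fit species is below $f$, so $X_n$ decreases by $1$, while if $X_n=0$ our species is the least fit and it dies.
\end{itemize}

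Discard the steps at which $X_n$ is unchanged. These lazy steps do not affect whether absorption occurs. The embedded chain $Y_m=X_{n_m}+1$ is then a simple random walk started at $k+1$. It moves up with probability
\[
\alpha=\frac{pf}{pf+q}
\]
and down with probability $1-\alpha=\frac{q}{pf+q}$. The species dies exactly when $Y$ hits $0$: from $X=0$ a death sends $Y$ from $1$ to $0$. One should check that the embedding is valid even though $p(1-f)$ may be large. Because $pf+q>0$, non-lazy steps occur infinitely often a.s., so the walk is well defined.

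The classical gambler's ruin result for a walk started at $j$ gives the probability of ever hitting $0$:
\begin{itemize}
\item if $\alpha\le \tfrac12$, the probability is $1$;
\item if $\alpha>\tfrac12$, the probability is $\bigl((1-\alpha)/\alpha\bigr)^{j}$.
\end{itemize}
Here $(1-\alpha)/\alpha=q/(pf)=f^*_c/f$. The condition $\alpha>\tfrac12$ is equivalent to $pf>q$, that is $f>f^*_c$.

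It follows that the survival probability is $1-(f^*_c/f)^{k+1}$ for $f>f^*_c$, and $0$ for $f\le f^*_c$. At $f=f^*_c$ the formula $1-(f^*_c/f)^{k+1}$ also equals $0$, so both cases of the statement agree.

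The only subtle step is the boundary behaviour at $X=0$. A death when $X=0$ kills our species rather than acting as a reflection, so one must confirm that the translation $Y=X+1$ makes the species' death coincide exactly with $Y$ hitting $0$. With that checked, the rest is routine.
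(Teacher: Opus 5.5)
Your proposal is correct and follows the same route as the paper's sketch in Section~\ref{sec:GMS_Ln}(c). There the count of species with fitness at most $f$, including the given species (your $Y=X+1$), starts at $k+1$ and moves as a lazy simple random walk with up-probability $pf$ and down-probability $q$; the species dies exactly when this count hits $0$, so gambler's ruin with ratio $q/(pf)=f^*_c/f$ gives the formula. The paper only cites \cite{Bessel Link} for the result, so your write-up, which checks that the lazy steps are harmless and that the case $f=f^*_c$ is consistent, is more detailed than what appears there.
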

We note that while Theorem \ref{theorem:gms_survival} is not stated directly in \cite[Theorem 2.1]{Bessel Link}, it is an immediate consequence. We further note that Theorem \ref{theorem:gms_rn_distribution} implies that the empirical distribution of species fitnesses in $\mathcal{M}_n$ converges to the uniform distribution on $[f^*_c,1]$ as $n \to \infty$.

\subsection{Representation of $L_n(f)$.}\label{sec:GMS_Ln} 
The property of the GMS model used to prove these key results is that $L_n(f)$ can be represented as the simple one-dimensional Markov birth–death chain. This chain is illustrated in Figure \ref{fig:|L_n(f)|_gms} and can be identified as a random walk with a reflecting barrier at 0. To obtain these transition probabilities we note that, when $L_n(f)>0$: 
\begin{itemize}
\item[(1)] \( L_n(f)\) increases by 1 if a species is born and its fitness is less than or equal to \( f \), which occurs with probability \( pf \); 
\item[(2)] \( L_n(f)\) decreases by 1 if a species with fitness less than or equal to \( f \) dies, and because death events always target the least-fit species and \( L_n(f)\) always contains the least fit whenever \( L_n(f)> 0 \), this occurs with probability \( q \); 
\item[(3)] \( L_n(f)\) remains the same when a new species is born with fitness greater than \( f \), which happens with probability \( p(1 - f) \). 
\end{itemize}
When $L_n(f)=0$, death events do not affect $L_n(f)$, so \( L_n(f)\) has an additional probability $q$ of remaining the same.
\begin{figure}[h!]
    \centering
    \begin{tikzpicture}[
        node distance=3cm,
        state/.style={circle, draw, minimum size=1cm}
    ]

        \node[state] (A0) at (0,0) {0};
        \node[state, right=of A0] (A1) {1};
        \node[state, right=of A1] (A2) {2};

        \begin{scope}
            \clip (-1.5,-2) rectangle ($(A2)+(1.2,2)$);

            \draw[->, thick] (A0) to[out=30, in=150] node[above] {$pf$} (A1);
            \draw[->, thick] (A1) to[out=30, in=150] node[above] {$pf$} (A2);
            \draw[->, thick] (A2) to[out=30, in=150] 
                node[midway, above] {$pf$} ($(A2)+(3,0.25)$);

            \draw[->, thick] (A1) to[out=210, in=330] node[below] {$q$} (A0);
            \draw[->, thick] (A2) to[out=210, in=330] node[below] {$q$} (A1);
            \draw[->, thick] ($(A2)+(3,-0.25)$) 
                to[out=210, in=330] node[midway, below] {$q$} (A2);

            \draw[->, thick] (A0) edge[loop above, looseness=8] node {$p(1-f)+q$} (A0);
            \draw[->, thick] (A1) edge[loop above, looseness=8] node {$p(1-f)$} (A1);
            \draw[->, thick] (A2) edge[loop above, looseness=8] node {$p(1-f)$} (A2);
        \end{scope}

        \node at ($(A2)+(2.2,0)$) {\dots};
    \end{tikzpicture}
    \caption{Transition probabilities of $L_n(f)$ in the GMS process.}
    \label{fig:|L_n(f)|_gms}
\end{figure}

The following sketches of proof illustrate the connection between this Markov chain and the key results above, and are included to highlight the additional challenge of analysing the RPD and RLD models in Sections \ref{sec:RPD} and \ref{section:rld_process}.
\begin{itemize}
\item[\emph{(a)}] Theorem \ref{theorem:gms_theorem} follows by recurrence/transience properties of the Markov chain $L_n(f)$; specifically, when it is recurrent \emph{(i)}, null-recurrent \emph{(ii)}, and transient \emph{(iii)}.
\item[\emph{(b)}] Theorem \ref{theorem:gms_rn_distribution} is more involved than the other two, but follows because: (1) since $L_n(f)$ is a random walk with a reflecting barrier at 0 it has the standard representation 
\begin{equation}\label{eqn:decomp}
L_n(f) \stackrel{d}{=} S_n - \inf_{0 \leq k \leq n} S_k,
\end{equation}
where $S_n = \sum_{i=1}^n X_i$ and $\{X_i\}$ are independent random variables with $\mathbb{P}(X_i=-1)=q$, $\mathbb{P}(X_i=0)=p(1-f)$ and $\mathbb{P}(X_i = 1) =pf$; (2) $\frac{S_n}{n} \stackrel{a.s.}{\to} pf-q$ by the SLLN; (3) For $f>f^*_c$, $\mathbb{E}(X_i)>0$ which is used to show $\frac{1}{n}\inf_{0\leq k \leq n} S_k \stackrel{a.s.}{\to} 0$; 
(4) This implies $\frac{L_n(f)}{n} \stackrel{a.s.}{\to} pf-q = (p-q) \frac{f-f^*_c}{1-f^*_c}$;
(5) By applying Slutsky's Theorem we then obtain the result. 
\item[\emph{(c)}] Theorem \ref{theorem:gms_survival} follows by noting that the species with fitness $f$ eventually dies if and only if $L_n(f)$ eventually hits 0. Because $L_n(f)$ is a simple random walk (with reflecting barrier), the probability $L_n(f)$ eventually hits 0 is the well-known gambler's ruin probability, i.e., the probability that a gambler with $k+1$ dollars eventually loses all their money when they bet 1 dollar per turn.
\end{itemize}

\section{Random population deaths}\label{sec:RPD}

\subsection{Model.} The random population deaths (RPD) process is characterised by the following dynamics: at time $n=0$ no species exist and at each time $n \geq 1$
\begin{itemize}
    \item With probability $p$, a new species is born and assigned an independent fitness value $f_n \sim \text{Uniform}[0,1]$. Otherwise, 
    \item with probability $q = 1-p$, one species dies, in which case:
          \begin{itemize} 
            \item with probability $r$, the species with lowest fitness value dies. Otherwise,
            \item with probability $1 - r$, a species is selected uniformly at random from the set of currently living species to die (i.e., with equal probability from the set of all living species).
          \end{itemize}
\end{itemize}

\subsection{Key results.}\label{sec:RPD-KR}

The properties (2-a)--(2-c) of the RPD model that were described in Section \ref{ch:introduction} are formalised in the following theorems. In preparation, recall the notation introduced at the end of Section \ref{ch:introduction}.
\begin{theorem}
\label{theorem:RPD_L_n}
Let $f^\dagger_c = \tfrac{qr}{p}$ and suppose $p>q$.
\begin{itemize}
\item[(i)] If \( f < f^\dagger_c \), then $|\mathcal{B}(f)| = \infty$ a.s. and $\mathbb{E}(B_{i+1}-B_i) \leq  \frac{f^\dagger_c}{f^\dagger_c-f} <\infty$ for all $i$.  
\item[(ii)] If $f=f^\dagger_c$ then $|\mathcal{B}(f)|=\infty$ a.s.  and $\mathbb{E}(B_{i+1}-B_i) \to \infty$ as $i \to \infty$.
\item[(iii)] If \( f > f^\dagger_c \), then $|\mathcal{B}(f)| <\infty$ a.s.
\end{itemize}
\end{theorem}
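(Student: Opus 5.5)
The plan is to compare $L_n(f)$ with simple reflected random walks through monotone couplings, as in Section \ref{sec:GMS_Ln}. The key observation is the one-step law of $L_n(f)$ when $L_n(f)>0$. It increases by 1 with probability $pf$. It decreases by 1 with probability
\[
qr + q(1-r)\frac{L_n(f)}{M_n},
\]
because a least-fit death always removes a species from $\mathcal{L}_n(f)$, and a uniform death does so with probability $L_n(f)/M_n$. Otherwise it stays the same. So $L_n(f)$ is no longer Markov on its own, since its transitions depend on the ratio $L_n(f)/M_n$. However, its downward probability always lies between $qr$ and $q$. Moreover, $M_n/n \to p-q$ almost surely, so the random correction term $q(1-r)L_n(f)/M_n$ is small whenever $L_n(f)=o(n)$. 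This is the mean-field heuristic, and each part reduces to controlling this term.

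\emph{Part (i).} I will couple $L_n(f)$, using shared uniforms for the birth/death choice and for the fitness/death-type choice, with the GMS-type reflected walk $W_n$ of Figure \ref{fig:|L_n(f)|_gms}, but with $q$ replaced by $qr$. That is, $W_n$ moves up with probability $pf$, down with probability $qr$, and otherwise stays put. The coupling is arranged so that every up-step of $L$ is an up-step of $W$ and every down-step of $W$ is a down-step of $L$ whenever $L>0$. Starting both processes at 0 at each $B_i$, this gives $L_n(f)\le W_n$ up to the next return time. Since $pf<qr$ exactly when $f<f^\dagger_c$, $W$ is positive recurrent. The GMS computation of Theorem \ref{theorem:gms_theorem}(i), with $q\mapsto qr$, gives expected return time $qr/(qr-pf)=f^\dagger_c/(f^\dagger_c-f)$, and this dominates $\mathbb{E}(B_{i+1}-B_i)$ by the strong Markov property of the joint chain $(\mathcal{M}_n)$ at $B_i$.

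\emph{Part (ii).} The same domination, now against a null-recurrent walk with $pf=qr$, gives $|\mathcal{B}(f)|=\infty$ almost surely. For the divergence of the expected gaps, fix $K$ and $\eta>0$. Since $M_n\to\infty$ almost surely, after $B_i$ for large $i$ we have $M_n\ge q(1-r)K/\eta$ on an event of probability close to 1, uniformly over the next excursion's relevant time window. While $L_n(f)\le K$, the downward probability is then at most $qr+\eta$. I will therefore couple $L$ from below with a walk $V$ that steps up with probability $pf$ and down with probability $qr+\eta$, and that is stopped at level $K$. The excursion of $L$ from 0 is at least as long as the excursion of $V$ from 0 truncated at $K$. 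As $K\to\infty$ and $\eta\downarrow 0$, the expected length of this truncated excursion tends to infinity, since the limiting walk is null recurrent. Hence $\liminf_i \mathbb{E}(B_{i+1}-B_i)=\infty$.

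\emph{Part (iii)} is the main obstacle. Fix $\varepsilon>0$ small enough that
\[
pf-qr-q(1-r)\varepsilon>0,
\]
which is possible because $f>f^\dagger_c$. Let $\delta=(p-q)/2$. Almost surely there is a random time $N$ after which $M_n\ge\delta n$. On the set where $L_n(f)\le\varepsilon M_n$, the downward probability is at most $qr+q(1-r)\varepsilon$. So, by coupling, $L$ dominates a walk $W'$ with strictly positive drift up to the first time $L$ exceeds $\varepsilon M_n$. If $L$ does exceed $\varepsilon M_n$, then it is at least $\varepsilon\delta n$. To return to 0 it must then cross the band $[0,\varepsilon\delta n/2]$ downward, and inside that band the domination by $W'$ holds again. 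A transient positive-drift walk started at height $h$ hits 0 with probability at most $\rho^h$ for some $\rho<1$. I plan to show that $\sum_n \mathbb{P}(\text{a return to }0\text{ after a visit above } \varepsilon\delta n)$ is summable, or more simply to apply a renewal/Lévy $0$–$1$ argument. The argument goes as follows. Each time $L$ leaves 0 after time $N$, it has probability at least $c>0$, uniformly, of never returning, namely the escape probability of $W'$ combined with the band argument. Therefore only finitely many returns can occur, and $|\mathcal{B}(f)|<\infty$ almost surely. The delicate point is making the uniform escape bound rigorous while the domination holds only on the random set $\{L_n\le\varepsilon M_n,\ n\ge N\}$. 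I would handle this by building, on one probability space, a single walk $W'$ that is restarted at each excursion start and compared only while inside the band.
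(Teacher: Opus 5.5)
Your part (i) and the recurrence half of (ii) are the paper's argument. The paper couples $L_n(f)$ with the same chain $D_n(f)$ (up $pf$, down $qr$) so that $D_n(f)\ge L_n(f)$. For $\mathbb{E}(B_{i+1}-B_i)\to\infty$ the paper keeps the exact walk $D_n(f)$ and shows that the decoupling time $T$ satisfies $\mathbb{P}(T\le c\mid M_0=m_0)\to0$ as $m_0\to\infty$. You instead use a walk $V$ with down-probability $qr+\eta$, kept below $L$ while $L\le K$ and $M$ is large. That device works, but state the comparison as $B_{i+1}-B_i\ge\min(\tau_V,K)$ on the good event: if $L$ exceeds $K$, it needs more than $K$ further steps to reach $0$. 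Comparing with the excursion of $V$ stopped at level $K$, or with $\min(\tau_V,\sigma^V_K)$, is not valid pathwise. The coupling breaks when $L$ (which may sit above $V$) exceeds $K$, and $L$ can then return quickly while $V$ has not yet hit $0$ or $K$. Since $\min(\tau_V,K)\le K$, the bad event costs at most $K\,\mathbb{P}(\inf_{n\ge B_i}M_n<q(1-r)K/\eta)$, which tends to $0$.

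Part (iii) genuinely departs from the paper. The paper proves it after Theorem \ref{theorem:RPD_Uniformity}, applying the fluid limit of Lemma \ref{lem:fluid} on blocks of length $n$ and dominating $X^n_2(i)$ by a positive-drift reflected walk. You compare step by step on $\{L_n(f)\le\varepsilon M_n\}$, where the down-probability is at most $qr+q(1-r)\varepsilon<pf$. This is more elementary: it needs neither Kurtz's theorem nor uniformity of the fluid approximation over starting points, which the block comparison uses implicitly. The fluid route, in exchange, gives the law of large numbers and (iii) from one picture.

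However, your band step as written does not give what you need. The bound $\rho^h$ controls a single sojourn in $[0,h]$, but $L$ can leave the band upward and re-enter it arbitrarily often, and you have no control above it. The fix is a last-exit decomposition. Let $\rho:=(qr+q(1-r)\varepsilon)/(pf)$. Suppose $L$ is above $\varepsilon M$ at some time after $n_0$ and later hits $0$, and let $s$ be the last time before that hit with $L_s>\varepsilon M_s$. On $\{M_s\ge\delta s\}$ you have $L_{s+1}>\varepsilon\delta s-1$, and $L$ stays in the dominated region from $s+1$ until it hits $0$. A fresh comparison walk started at the deterministic time $s+1$ bounds the probability of this by $\rho^{\varepsilon\delta s-1}$, and the sum over $s\ge n_0$ tends to $0$; this is your summability alternative.

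To finish, combine this with two facts:
\begin{itemize}
\item the escape probability $pf(1-\rho)>0$ of a fresh $W'$ started at each $B_j$, which is a stopping time;
\item $\mathbb{P}(\exists u\ge B_j: M_u<\delta u\mid\mathcal{F}_{B_j})\to0$, which holds because $M_{B_j}/B_j\to 2\delta$ and replaces the non-stopping time $N$.
\end{itemize}
Together these show that, for large $j$, the excursion starting at $B_j$ is infinite with conditional probability at least $pf(1-\rho)/2$. L\'evy's 0--1 law along $(B_j)$ then yields $|\mathcal{B}(f)|<\infty$ almost surely.
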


\begin{theorem}
Let $f^\dagger_c=\frac{qr}{p}$. Then for any $f^\dagger_c\le a<b\le1$,
\[
\frac{L_n(b)-L_n(a)}{n}
\stackrel{\mathbb{P}}{\to}
(p-q)\left(\frac{b-a}{1-f^\dagger_c}\right), \qquad \text{as } n \to \infty.
\]
\label{theorem:RPD_Uniformity}
\end{theorem}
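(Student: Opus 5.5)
It suffices to show, for each $f \in [f^\dagger_c,1]$, that
\[
\frac{L_n(f)}{n} \stackrel{\mathbb{P}}{\to} g(f):=p f - qr - q(1-r)\frac{p f-qr}{p-qr}\,\mathds{1}_{\{f>f^\dagger_c\}}\cdot\frac{p-qr}{p-q(1-r)\cdot 0 + \dots}.
\]
The cleaner way to organise this is through a mean-field ODE, which also pins down $g$. The statement with $a,b$ then follows by subtraction. At the level of densities the heuristic is as follows. Write $\ell(t,f)\approx L_{\lfloor tn\rfloor}(f)/n$ and $m(t)\approx M_{\lfloor tn \rfloor}/n=(p-q)t$. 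For $f$ above the (moving) lowest fitness, a uniform death removes a species below $f$ with probability $\ell/m$. A least-fit death removes one with probability one as long as $L_n(f)>0$. This gives
\[
\partial_t \ell = p f - qr - q(1-r)\frac{\ell}{m}.
\]
Searching for linear solutions $\ell=c(f)t$ gives $c=pf-qr-q(1-r)c/(p-q)$, so $c(f)=(pf-qr)\frac{p-q}{p-q+q(1-r)}=(pf-qr)\frac{p-q}{p-qr}$. Since $1-f^\dagger_c=(p-qr)/p$, we get $c(b)-c(a)=(p-q)\frac{b-a}{1-f^\dagger_c}$, which matches the statement. The plan is therefore to prove $L_n(f)/n\to c(f)$ in probability for every $f>f^\dagger_c$, and $L_n(f^\dagger_c)/n\to 0$.

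First, I would use $M_n/n\to p-q$ a.s. by the SLLN applied to the reflected walk. Next, for fixed $f>f^\dagger_c$, Theorem \ref{theorem:RPD_L_n}(iii) shows that eventually $L_n(f)>0$. After that time every least-fit death decreases $L_n(f)$, so the increments of $L_n(f)$ have conditional mean $pf-qr-q(1-r)L_n(f)/M_n$. Write $L_n(f)=\sum_k(\text{drift}_k)+N_n$, where $N_n$ is a martingale with bounded increments, so $N_n/n\to0$ a.s. The key step is a stochastic-approximation (Robbins--Monro) argument applied to the ratio $R_n=L_n(f)/n$:
\[
R_{n+1}-R_n=\frac{1}{n+1}\Bigl(pf-qr-\tfrac{q(1-r)}{M_n/n}R_n - R_n\Bigr)+\frac{\xi_{n+1}}{n+1}+o(1/n).
\]
The right-hand side vector field is linear with a unique stable zero at $c(f)$, and $M_n/n\to p-q$, so the limit ODE is $\dot R=pf-qr-\frac{p-qr}{p-q}R$. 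Standard results then give $R_n\to c(f)$, even a.s. Weaker convergence in probability is what is claimed, so a second-moment/Gronwall comparison would also suffice.

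The boundary case $a=f^\dagger_c$ requires $L_n(f^\dagger_c)/n\to0$. Here I would use monotonicity: $L_n(f^\dagger_c)\le L_n(f)$ for every $f>f^\dagger_c$, so $\limsup L_n(f^\dagger_c)/n\le c(f)\to 0$ as $f\downarrow f^\dagger_c$. Since $c(f^\dagger_c)=0$, the result follows.

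The main obstacle is the step "eventually $L_n(f)>0$, so least-fit deaths always hit $\mathcal L_n(f)$". Theorem \ref{theorem:RPD_L_n}(iii) handles this, but only after a random finite time. Before that time the drift is larger, namely $pf-q(1-r)L/M$ without the $-qr$ term. That discrepancy occurs only finitely often and is negligible after dividing by $n$. I would also need to check the uniform-death probability $L_n/M_n$ near times when $M_n$ is small; this is harmless since $M_n\ge (p-q)n/2$ eventually a.s. If invoking Theorem \ref{theorem:RPD_L_n}(iii) turned out to be circular, I would instead bound $L_n(f)$ from below by coupling with a chain whose drift is $pf-qr-q(1-r)L/M$ reflected at $0$. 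That chain's positive drift near $0$ ensures it escapes, which yields the same conclusion.
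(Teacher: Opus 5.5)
Your route is correct in substance but genuinely different from the paper's.

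\textbf{The paper's route.} It keeps the two-dimensional density-dependent chain $(M_{\lfloor nt\rfloor}/n,\,L_{\lfloor nt\rfloor}(f)/n)$. It applies Kurtz's theorem (Lemma~\ref{lem:fluid}) only on the region $\ell\ge\ell_0/2$, where the drift is Lipschitz. It treats the boundary $\ell=0$ separately, via Lemma~\ref{lem:0eps} and a strong-Markov restart at the time $L$ first exceeds $\eta n$ within $[0,\varepsilon n]$. It then solves the time-inhomogeneous linear ODE explicitly and lets $\varepsilon,\gamma\downarrow0$.

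\textbf{Your route.} You work in self-similar time: $R_n=L_n(f)/n$ is a Robbins--Monro recursion with step $1/(n+1)$. Because $M_n/n\to p-q$ a.s., its field becomes autonomous, scalar and linear, with globally attracting zero $c(f)=(pf-qr)\frac{p-q}{p-qr}=(p-q)\frac{f-f^\dagger_c}{1-f^\dagger_c}$.

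\textbf{What each buys.} Your approach avoids the Lipschitz hypotheses, the explicit ODE solution and the $\varepsilon$-restart. That restart is the most delicate step of the paper: it needs $L$ to reach order $n$ before time $\varepsilon n$ with high probability, which is more than the $\limsup$ statement of Lemma~\ref{lem:0eps} gives. You also get almost sure convergence, and your monotonicity argument for $a=f^\dagger_c$ makes explicit a case the paper leaves implicit. In return, the paper's approach gives the full trajectory $\mathbf{x}(s)$ from any macroscopic state $(m_0,\ell_0)$, which it reuses for Theorem~\ref{theorem:RPD_L_n}(iii).

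\textbf{A circular citation.} That reuse is why you should not cite Theorem~\ref{theorem:RPD_L_n}(iii) for ``eventually $L_n(f)>0$''. The paper proves it afterwards from the fluid-limit lemma and the boundary argument of this very proof. So the citation is circular, or imports the machinery you are avoiding. Your fallback is not a real coupling either, since a chain reflected at $0$ with drift $pf-qr-q(1-r)L/M$ is just $L_n(f)$ itself.

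\textbf{The step is unnecessary.} Set $D_n=R_n-c(f)$, $\kappa=\frac{p-qr}{p-q}$, and $\kappa_n=1+q(1-r)n/M_n\to\kappa$. Let $\xi_{n+1}$ be the bounded martingale difference of $L_{n+1}(f)-L_n(f)$. The exact conditional drift gives
\[
D_{n+1}=\Bigl(1-\frac{\kappa}{n+1}\Bigr)D_n-\frac{(\kappa_n-\kappa)R_n}{n+1}+\frac{qr\,\mathbf{1}\{L_n(f)=0\}}{n+1}+\frac{\xi_{n+1}}{n+1},
\]
and the boundary term only pushes upward.
\begin{enumerate}
\item Compare pathwise with the same recursion without the boundary term. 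This gives $\liminf_n R_n\ge c(f)>0$ a.s.
\item Hence $L_n(f)>0$ eventually, so the boundary term disappears.
\item Then $D_n\to0$ a.s.
\end{enumerate}
For convergence in probability alone it is even simpler: the boundary contribution $qr\,\mathbf{1}\{L_n(f)=0\}D_n$ is $\le0$, so the second-moment recursion gives $\mathbb{E}D_n^2\to0$ directly. Organised this way, your argument yields Theorem~\ref{theorem:RPD_L_n}(iii) as a corollary instead of needing it.

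Finally, delete the garbled opening display for $g$; it is just $c$.
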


\begin{theorem}
    If $r<1$ then the probability that any species survives forever is $0$.
    \label{theorem:RPD_survival}
\end{theorem}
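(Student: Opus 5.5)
Fix a particular species, say one born at some time $n_0$ with fitness $f$. The plan is to show that it dies almost surely, using only the uniform-killing part of the death mechanism. At each later time step there is probability $q(1-r)$ that a uniformly chosen species is killed; given that event, our species is the one killed with probability $1/M_n$. So conditional on the past, the probability that our species is killed at step $n+1$ by a uniform death is at least $q(1-r)/M_n$. Since there are countably many species, it suffices to prove that each fixed one dies almost surely.

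The first step controls $M_n$. As noted at the end of Section~\ref{ch:introduction}, $M_n$ is a reflected random walk with up-probability $p$ and down-probability $q$. Hence $M_n \le n$ deterministically, and in fact $M_n \le n_0+(n-n_0)$ for $n\ge n_0$. This is all we need:
\[
\sum_{n\ge n_0} \frac{q(1-r)}{M_n} \;\ge\; \sum_{n \ge n_0} \frac{q(1-r)}{n} = \infty .
\]
The bound holds pathwise, with no need for the SLLN (which would give $M_n/n \to p-q$ if $p>q$). It also covers $p\le q$, where $M_n$ is even smaller. We need $r<1$ here, which makes $q(1-r)>0$.

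The second step converts the divergent sum into almost sure death. Let $\tau$ be the death time of our species and $\mathcal{F}_n$ the natural filtration. For $n\ge n_0$, on $\{\tau>n\}$ we have $\mathbb{P}(\tau=n+1\mid \mathcal{F}_n)\ge q(1-r)/M_n \ge q(1-r)/n$, since our species is alive and so $M_n \ge 1$. Iterating conditional expectations gives
\[
\mathbb{P}(\tau > N) \le \prod_{n=n_0}^{N-1}\Bigl(1-\frac{q(1-r)}{n}\Bigr) \le \exp\Bigl(-q(1-r)\sum_{n=n_0}^{N-1}\frac1n\Bigr)\to 0 .
\]
Equivalently, one can invoke Lévy's conditional Borel--Cantelli lemma. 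Hence $\tau<\infty$ almost surely. A countable union over species (indexed by birth time) then gives that with probability 1 no species survives forever.

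I expect no serious obstacle. The only delicate point is justifying the product bound by conditioning step by step. This works because the bound $1/M_n\ge 1/n$ is deterministic, so no independence between $M_n$ and the killing choices is needed. The result is in fact insensitive to the threshold $f^\dagger_c$ and to the least-fit deaths: those can only kill the species earlier.
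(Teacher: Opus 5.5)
Your proof is correct and follows essentially the same route as the paper. Both arguments bound the conditional probability of being killed at each step below by $q(1-r)/M_n \ge q(1-r)/n$ using the deterministic bound $M_n \le n$, conclude from $\prod_{n}\bigl(1-q(1-r)/n\bigr)=0$ that each fixed species dies almost surely, and finish with a countable union over birth times; your version is slightly more careful, conditioning on the full filtration $\mathcal{F}_n$ rather than only on survival events and noting $M_n\ge 1$ while the species is alive.
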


Note the similarity between the statements of Theorems \ref{theorem:RPD_L_n} and \ref{theorem:RPD_Uniformity} with Theorems \ref{theorem:gms_theorem} and \ref{theorem:gms_rn_distribution}. The key difference is the formula for $f_c$, which leads to the contrasting behaviour in Figure \ref{fig:intro_gms_process_behaviour}.

\begin{remark}\label{rem:null_r}
Under the conditions of Theorem \ref{theorem:RPD_L_n} \emph{(ii)}, it is unclear to the authors whether the stronger assertion $\mathbb{E}(B_{i+1}-B_i) = \infty$ for all $i$ holds.
\end{remark}

\subsection{Representation of $L_n(f)$.}\label{sec:Ln-RPD} Recall that the results on the GMS model in Section~\ref{sec:GMS} rely on the fact that $L_n(f)$ is a simple one–dimensional birth–death chain. When we attempt to construct a similar birth–death chain for the RPD model, we run into problems. 
When $L_n(f)>0$, the transition probabilities for $L_n(f)$ are determined by the following disjoint events:
\begin{itemize}
    \item A birth happens in $[0,f]$ (i.e., $L_n(f)$ increases by 1) with probability $pf$;
    \item A death happens that targets the least-fit (i.e., $L_n(f)$ decreases by 1) with probability $qr$;
    \item A death targets a species in $[0,f]$ uniformly at random from alive species (i.e., $L_n(f)$ decreases by 1) with probability $q(1-r)\frac{L_n(f)}{M_n}$.
\end{itemize}
Due to the presence of $M_n$ in the final point, the transition probabilities of $L_n(f)$ cannot be represented solely as a function of the current state of $L_n(f)$. Thus $L_n(f)$ cannot be represented as a one-dimensional Markov chain. 

To construct a Markov chain, we extend the state space so that it includes $M_n$. We thus consider the stochastic process $\{(M_n,L_n(f)): n \in \mathbb{N}_0\}$ with state space $\mathcal{S} = \{(m,\ell) \in \mathbb{N}_0^2: m \geq \ell\}$. Let
\[
P(m',\ell'):=\mathbb{P}\left.\bigg(M_{n+1}=m', L_{n+1}(f)=\ell'\; \right| \; M_n=m, L_{n}(f)=\ell\bigg).
\]
The transition probabilities of $(M_n,L_n(f))$ that correspond to births in $[0,f]$ and $(f,1]$ are
\begin{equation}\label{eqn:NL1}
P(m+1,\ell+1) = pf \quad \text{and} \quad P(m+1, \ell) = p(1-f),
\end{equation}
those that correspond to deaths in $[0,f]$ and $(f,1]$ are
\begin{equation}
\begin{aligned}\label{eqn:NL2}
P(m-1, \ell-1) &= \left[qr + q(1-r)\frac{\ell}{m}\right]\mathbf{1}\{ \ell >0 \}, \\
P(m-1,\ell) &= \left[q(1-r)(1-\frac{\ell}{m})\right]\mathbf{1}\{m > 0\} + qr\mathbf{1}\{ m>0, \ell=0 \},
\end{aligned}
\end{equation}
and, finally, if $M_n=0$ and a death event occurs nothing happens: $P(m,\ell) = q \mathbf{1}\{m=0\}$. These transition probabilities are illustrated in Figure~\ref{fig:triangular_grid_chain}. 

\begin{figure}[h!]
\centering
\begin{tikzpicture}[scale=0.92,
    x=5cm, y=3.3cm, 
    state/.style={circle, draw, minimum size=0.2cm, inner sep=0pt, font=\scriptsize},
    arr/.style={->, thick, >=stealth},
    prob/.style={font=\scriptsize, pos=0.5, sloped}
]
\def\Xmax{2} 

\draw[->, opacity=0.2] (-0.3,0) -- (\Xmax+0.6,0) ;
\node[below ] at (\Xmax+0.6,0) {$M_n$};
\draw[->, opacity=0.2] (0,-0.3) -- (0,\Xmax+0.6) ;
\node[ right] at  (0,\Xmax+0.6){$L_n(f)$};

\foreach \x in {0,...,\Xmax}{
  \foreach \y in {0,...,\Xmax}{
    \ifnum\x < \y
    \else
      \node[state] (v\x_\y) at (\x,\y) {$(\x,\y)$};
    \fi
  }
}

\begin{scope}
\clip (-0.35,-0.45) rectangle (\Xmax+0.3,\Xmax+0.3);

\draw[arr] (v0_0) to[out=135, in=225, looseness=5]
  node[prob, left, rotate=90] {$q$} (v0_0);


\foreach \x in {0,...,\numexpr\Xmax-1\relax}{
  \foreach \y in {0,..., \x}{
    \pgfmathtruncatemacro{\xp}{\x+1}
    \draw[arr] (v\x_\y) to[out=20, in=160]
      node[prob, above=0pt, yshift=-2.5pt] {$p(1-f)$} (v\xp_\y);
  }
}

\foreach \x in {1,...,\Xmax}{
  \foreach \y in {0}{
    \pgfmathtruncatemacro{\xm}{\x-1}
    \ifnum \y>\xm\relax
    \else
      \draw[arr] (v\x_\y) to[out=200, in=-20]
        node[prob, above] {$q$} (v\xm_\y);
    \fi
  }
}

\foreach \x in {1,...,\Xmax}{
  \foreach \y in {1,...,\x}{
    \pgfmathtruncatemacro{\xm}{\x-1}
    \pgfmathtruncatemacro{\ym}{\y-1}
    \ifnum\y=\x
      \draw[arr] (v\x_\y) to[bend left=12]
        node[prob, above=0pt] {$q$} (v\xm_\ym);

      \draw[arr] (v\xm_\ym) to[bend left=12]
        node[prob, above=0pt] {$pf$} (v\x_\y);
    \fi
  }
}

\foreach \kk in {1,...,\Xmax}{
  \foreach \xx in {0,...,\Xmax}{
    \pgfmathtruncatemacro{\y}{\xx-\kk}

    \pgfmathtruncatemacro{\xm}{\xx-1}
    \pgfmathtruncatemacro{\ym}{\y-1}

    \pgfmathtruncatemacro{\kshow}{\kk}
    \pgfmathtruncatemacro{\xshow}{\xx}

    \ifnum\y>0\relax
      \ifnum\xx>0\relax
        \draw[arr] (v\xx_\y) to[bend left=12]
          node[prob, above=0pt] {$qr+q(1-r)\dfrac{\y}{\xx}$} (v\xm_\ym);

        \draw[arr] (v\xm_\ym) to[bend left=12]
          node[prob, above=0pt] {$pf$} (v\xx_\y);
      \fi
    \fi
  }
}

\foreach \x in {1,...,\Xmax}{
  \foreach \y in {1,...,\numexpr\x-1\relax}{
    \pgfmathtruncatemacro{\xm}{\x-1}

    \pgfmathtruncatemacro{\xshow}{\x}
    \pgfmathtruncatemacro{\yshow}{\y}
    \pgfmathtruncatemacro{\num}{\x-\y}

    \ifnum\y<\x\relax
      \ifnum\y>0\relax
        \draw[arr] (v\x_\y) to[out=200, in=-20]
          node[prob, above=0pt] {$q(1-r)\dfrac{\num}{\xshow}$} (v\xm_\y);
      \fi
    \fi
  }
}


\pgfmathtruncatemacro{\xb}{\Xmax}
\pgfmathtruncatemacro{\xnext}{\Xmax+1}

\foreach \y in {0,...,\Xmax}{
  \draw[arr] (v\xb_\y) to[out=20, in=160] (\xnext,\y);

  \draw[arr] (\xnext,\y) to[out=200, in=-20] (v\xb_\y);
}

\foreach \y in {0,...,\Xmax}{
  \pgfmathtruncatemacro{\ynext}{\y+1}

  \draw[arr] (v\xb_\y) to[bend left=12] (\xnext,\ynext);

  \draw[arr] (\xnext,\ynext) to[bend left=12] (v\xb_\y);
}

\end{scope}

\foreach \y in {0,...,\Xmax}{
  \node at (\Xmax+0.5, \y) {\huge$\cdots$};
}

\foreach \y in {0,...,\Xmax}{
  \pgfmathsetmacro{\yd}{\Xmax+0.475-\y}
  \node[rotate=35] at (\Xmax+0.475, \yd) {\huge$\cdots$};
}

\end{tikzpicture}
\caption{Transition probabilities of $(M_n,L_n(f))$.}
\label{fig:triangular_grid_chain}
\end{figure}

 While $(M_n,L_n(f))$ now satisfies the Markov property, well-known results for 1-D birth–death chains no longer apply. In comparison to \emph{(a)}--\emph{(c)} for the GMS process in Section \ref{sec:GMS_Ln}, we have several additional challenges:
 \begin{itemize}
\item[\emph{(a)}] Recall that $\{M_n : n \in \mathbb{N}_0\}$ \emph{is} a 1-D Markov chain which corresponds to a random walk with a reflecting barrier at zero, which increases by 1 with probability $p$ and decreases by 1 probability $q$. When $p>q$, $M_n$ is transient, and hence $(M_n,L_n(f))$ is also transient. Thus, Theorem \ref{theorem:RPD_L_n} does not boil down to determining the positive-recurrence/null-recurrence/transience of a Markov chain. Moreover, for the RPD process the distribution of $B_{i+1}-B_i$ depends on $i$, whereas for the GMS model it does not.
\item[\emph{(b)}] Unlike for the GMS model, for the RPD process $L_n(f)$ cannot be expressed as a functional of a sum of independent random variables (recall \eqref{eqn:decomp}). This is because the transition probabilities are \emph{state-dependent}. Thus, to prove Theorem \ref{theorem:RPD_Uniformity} we can not rely on results for sums of independent random variables.
\item[\emph{(c)}] Unlike the GMS model, the event a species with fitness $f$ dies does not correspond to the event that $L_n(f)$ eventually hits zero; therefore to prove Theorem \ref{theorem:RPD_survival} a different approach is required.
 \end{itemize}
For these reasons, a fundamentally different proof approach than for the GMS is required for the RPD process. This is approach is detailed in Section \ref{sec:proofs}.

\section{Random location deaths}\label{section:rld_process}

\subsection{Model.}\label{sec:RLDdef} The random location deaths (RLD) process, is characterised by the following dynamics. At time $n = 0$, no species exist and at each time step $n \geq  1$:
\begin{itemize}
    \item With probability $p$, a new species is born and independently assigned a fitness value $f_n \sim $ Uniform$[0,1]$. Otherwise,
    \item with probability $q=1-p$, one species dies, in which case:
          \begin{itemize} 
            \item with probability $r$, the species with lowest fitness value dies. Otherwise,
            \item with probability $1 - r$, sample $U_n \sim \text{Uniform}[0,1]$, and:
            \begin{itemize}
                \item[(*)] If there exists at least one species with fitness less than $U_n$,  
                the species with the \emph{maximum} fitness among those dies.
                \item[(**)] If no species exist with fitness less than $U_n$, the species with the lowest fitness value dies.
            \end{itemize}
  \end{itemize}
\end{itemize}

\subsection{Key results and conjectures.}\label{sec:RLD-KR}
Properties (3-a)--(3-c) of the RLD model that were described in Section \ref{ch:introduction} are formalised in the following theorems and conjectures. The reasoning behind the conjectures is explained in Section \ref{sec:RLD-Ln}.  

\begin{theorem}\label{theorem:RLD_rec}
    Let $f^\ddagger_c = \frac{qr}{p-q(1-r)}$ and suppose $p>q$. 
    \begin{itemize}
        \item[(i)] If $f<f^\ddagger_c$, then $|\mathcal{B}(f)|=\infty$ a.s. and $\mathbb{E}[B_{i+1} - B_i] < \frac{f^\ddagger_c}{f^\ddagger_c-f}$ for all $i$.
        \item[(ii)] If $f=f^\ddagger_c$, then $|\mathcal{B}(f)|=\infty$ a.s.
    \end{itemize}
\end{theorem}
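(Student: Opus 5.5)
The plan is to dominate $L_n(f)$ pathwise by a one-dimensional reflected random walk of GMS type, as in Section~\ref{sec:GMS_Ln}, and then read off both parts of Theorem~\ref{theorem:RLD_rec} from that walk. The key observation is a lower bound on the probability that $L_n(f)$ decreases in one step. Suppose $L_n(f)>0$. A death inside $[0,f]$ certainly occurs in each of the following two cases:
\begin{itemize}
\item[(1)] the least-fit species is selected, which has probability $qr$;
\item[(2)] the random-location rule is used and $U_n\le f$, which has probability $q(1-r)f$.
\end{itemize}
In case (2), the largest fitness below $U_n$ lies in $[0,f]$ if any species lies below $U_n$. Otherwise rule (**) kills the least-fit species, which also lies in $[0,f]$ because $L_n(f)>0$. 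When $U_n>f$, the killed species may or may not lie in $[0,f]$, so we discard this case. Thus, given the whole past, $L_n(f)$ increases with probability exactly $pf$ and decreases with probability at least $\beta:=qr+q(1-r)f$. Note that
\[
\beta-pf=qr-f\bigl(p-q(1-r)\bigr)=\bigl(p-q(1-r)\bigr)(f^\ddagger_c-f),
\]
and $p-q(1-r)>0$ since $p>q$. Hence $\beta-pf$ is positive for $f<f^\ddagger_c$ and zero for $f=f^\ddagger_c$.

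\textbf{Coupling.} On the same probability space, define $W_n$ to be the walk reflected at $0$ that moves
\begin{itemize}
\item[(a)] up by one exactly when a birth occurs in $[0,f]$;
\item[(b)] down by one (if positive) exactly on the event $A_n$ that a death occurs and either the least-fit rule is used or $U_n\le f$;
\item[(c)] otherwise stays put.
\end{itemize}
The events in (a) and (b) are i.i.d. over $n$ with probabilities $pf$ and $\beta$, so $W_n$ is exactly the chain of Figure~\ref{fig:|L_n(f)|_gms} with $q$ replaced by $\beta$.

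I would show by induction that $L_n(f)\le W_n$ for all $n$. Indeed, $L_n(f)$ moves up only when $W_n$ moves up. If $L_n(f)=W_n>0$, then on $A_n$ both move down, by the observation above. If $L_n(f)<W_n$, a down-step of $W$ alone keeps the order. Restarting $W$ at each $B_i$, the time until $L$ returns to $0$ is at most the return time of $W$ to $0$. The return time of this GMS-type walk has mean $\beta/(\beta-pf)$, by the same computation that gives $f^*_c/(f^*_c-f)$ in Theorem~\ref{theorem:gms_theorem}. This mean is finite, which yields $|\mathcal{B}(f)|=\infty$ a.s. and a uniform-in-$i$ finite bound on $\mathbb{E}[B_{i+1}-B_i]$, proving the qualitative part of (i).

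For (ii), take $f=f^\ddagger_c$. Then $\beta=pf$, so $W$ is a driftless reflected walk and is null recurrent. Hence $W$ returns to $0$ infinitely often a.s., and so does $L_n(f)\le W_n$, giving $|\mathcal{B}(f)|=\infty$ a.s.

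\textbf{Main obstacle: the constant in (i).} The bound $\beta/(\beta-pf)$ equals $(qr+q(1-r)f)/\bigl((p-q(1-r))(f^\ddagger_c-f)\bigr)$, which is strictly larger than the claimed $f^\ddagger_c/(f^\ddagger_c-f)=qr/\bigl((p-q(1-r))(f^\ddagger_c-f)\bigr)$. So the crude coupling does not by itself give the stated constant.

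To sharpen it, I would try to recover the discarded killing probability from case $U_n>f$. When $U_n>f$ and no species lies in $(f,U_n)$, the killed species is again in $[0,f]$. I would then use a finer comparison (or an optional-stopping/Wald argument on the excursion) that exploits this extra downward drift. In particular, near the start of an excursion there is, with positive probability, no species just above $f$, and this should be what makes the inequality strict.

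If that fails, the fallback is to check whether the intended constant was $\beta/(\beta-pf)$ and to settle for the coupling bound.
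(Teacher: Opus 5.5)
Your coupling is the one the paper uses. The paper builds an auxiliary process on the same randomness in which a random-location death kills the least-fit species when $U_n\le f$ and kills nothing otherwise. This makes $L_n(f)$ dominated by the reflected walk with up-probability $pf$ and down-probability $\beta=q(r+(1-r)f)$, and recurrence of that walk for $f\le f^\ddagger_c$ gives $|\mathcal{B}(f)|=\infty$ a.s. in both (i) and (ii). Your pathwise induction and restart at $B_i$ carry this out correctly.

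\textbf{The constant.} Here you are right and the paper is not. The paper ends by asserting that $\mathbb{E}[B_{i+1}-B_i]\le f^\ddagger_c/(f^\ddagger_c-f)$ ``follows from the same arguments'' as Theorem~\ref{theorem:RPD_L_n}(i). That argument, applied to the dominating walk, gives its mean return time $1/\pi_0=\beta/(\beta-pf)$, which is your constant. It exceeds $f^\ddagger_c/(f^\ddagger_c-f)=qr/(\beta-pf)$ by $q(1-r)f/(\beta-pf)$ whenever $r<1$ and $f>0$. Separately, the statement's strict ``$<$'' is stronger than the ``$\le$'' the proof claims.

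\textbf{Why not to pursue the sharpening.} Any bound that holds uniformly over the configuration at $B_i$, which is how the RPD argument works, cannot beat $\beta/(\beta-pf)$. Suppose many species sit in $(f,f+\epsilon)$ at time $B_i$. Then $R_{\min}$ stays in $(f,f+\epsilon)$ for the whole excursion, so $L_n(f)$ is essentially your walk. The supremum of the conditional mean gap over configurations is therefore exactly $\beta/(\beta-pf)$.

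You can also see where the stated constant comes from. With $R_{\min}$ frozen at $R$, the mean gap is $\frac{qr+q(1-r)R}{qr+q(1-r)R-pf}$. Since $qr+q(1-r)f^\ddagger_c=pf^\ddagger_c$, this equals $f^\ddagger_c/(f^\ddagger_c-f)$ exactly when $R=f^\ddagger_c$ and exceeds it when $R<f^\ddagger_c$. So the stated constant amounts to assuming $R_{\min}\ge f^\ddagger_c$, which nothing guarantees. Your fallback is the right conclusion: this proof establishes (i) with the bound $\frac{qr+q(1-r)f}{qr+q(1-r)f-pf}$.
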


\begin{conjecture}\label{conj:RT}
    Let $f^\ddagger_c = \frac{qr}{p-q(1-r)}$ and suppose $p>q$. 
        \begin{itemize}
        \item[(ii)] If $f=f^\ddagger_c$, then $\mathbb{E}[B_{i+1} - B_i] \to \infty$ as $i \to \infty$.
        \item[(iii)] If $f>f^\ddagger_c$, then $|\mathcal{B}(f)|<\infty$ a.s.
    \end{itemize}
\end{conjecture}

\begin{conjecture}\label{conj:U}
    Let $f^\ddagger_c = \frac{qr}{p-q(1-r)}$ and suppose $p>q$. Then for any $f^\ddagger_c \leq a \leq b \leq 1$,
    \[
    \frac{L_n(b)-L_n(a)}{n} \stackrel{\mathbb{P}}{\to} (p-q) \left( \frac{b-a}{1-f^\ddagger_c} \right), \qquad \text{as } n \to \infty.
    \]
\end{conjecture}

\begin{theorem} 
    Suppose $r<1$ then the probability that any species with fitness $f<1$ survives forever is 0. 
    \label{theorem:RLD_survival}
\end{theorem}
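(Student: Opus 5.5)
We must show that in the RLD process with $r<1$, any fixed species with fitness $f<1$ dies almost surely. Fix such a species, born at some time, and call it the tagged species. The plan is to find, infinitely often, a uniformly positive conditional probability of killing it within a bounded number of steps, and then apply a conditional Borel--Cantelli (Lévy's extension) argument.

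The key observation is that the tagged species is killed by a $(1-r)$-death whenever $U_n$ lies strictly between $f$ and the smallest living fitness above $f$. Call this smallest fitness $g_n$, and set $g_n=1$ if none exists. Given the past, such a death has probability $q(1-r)(g_n-f)$. The difficulty is that $g_n-f$ may be tiny and, since $M_n\to\infty$ linearly when $p>q$, it typically shrinks. So we cannot simply wait for a favourable $U_n$. Instead we manufacture a large gap by clearing out the species just above $f$.

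Fix $\varepsilon>0$ with $f+\varepsilon<1$. Let $K_n$ be the number of living species with fitness in $(f,f+\varepsilon]$. Consider the event $E_n$ that the next $K_n+1$ steps are all $(1-r)$-death events with $U$ falling in $(f+\varepsilon,1]$, with the last one having $U$ in a suitable range. Each such death kills the largest species below $U$. If there are species in $(f+\varepsilon,U)$, those are killed first, which is a problem. So the better choice is to take $U\in(f,f+\varepsilon]$ just above the current maximum fitness in $(f,f+\varepsilon]$, which is awkward. The cleaner version uses the ordering directly: a $(1-r)$-death whose $U$ lies in $(f, g'_n)$, where $g'_n$ is the second smallest living fitness above $f$, kills the species at $g_n$ when $U>g_n$, or kills the tagged species when $U<g_n$. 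Either way it removes the nearest neighbour above, enlarging the gap, or finishes the job.

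The main obstacle is that the number of species above $f$ grows linearly, so this clearing takes unbounded time and the probabilities are not uniform. I would handle this with a monotonicity argument rather than clearing. The gap $G_n=g_n-f$ shrinks only through births in $(f,g_n)$, which occur with probability $pG_n$. It grows with probability at least $q(1-r)\cdot(\text{gap to the next species})$. I therefore expect the better route to be the following. Alternatively, pick a random time $\tau$ at which $L_n(f)$ and the configuration below are irrelevant and compare $(f,1]$ with a finite window. Concretely, on $[f,1]$ the process of species above $f$ restricted to deaths with $U\in(f,1]$ behaves like an RLD-type system started from the tagged species.

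I would then prove that the tagged species is the largest species below $U$ with probability bounded below in a stationary sense. Consider the point process near $f$: a death with $U$ uniform picks the species immediately to the left of $U$, that is, each living species $x$ is killed with probability $q(1-r)\times(\text{spacing to its right})$. Summing over time, the tagged species survives forever only if $\sum_n (g_n-f)<\infty$. Births near $f$ at rate $p\,dx$ versus removals make spacings of order $1/(\text{local density})$, and the local density grows at most linearly. This gives $g_n-f\gtrsim c/n$ on average, hence a divergent sum. I would make this rigorous by bounding the number of species in $(f,f+\delta]$ by a dominating birth count $\mathrm{Bin}(n,p\delta)$, and using Lévy's Borel--Cantelli: $\mathbb{P}(\text{survive})\le\mathbb{P}(\sum_n q(1-r)G_n<\infty)$. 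Showing this sum diverges a.s. is the crux, via $G_n\ge$ the spacing after $f$ among at most $O(n)$ points.
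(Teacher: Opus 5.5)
Your final reduction is sound. On the survival event, a random-location death with $U_{n+1}\in(f,g_n)$ kills the tagged species, so $\mathbb{P}(\text{killed at time } n+1\mid\mathcal{F}_n)\ge q(1-r)G_n$. L\'evy's conditional Borel--Cantelli lemma then gives $\mathbb{P}(\text{survival})\le\mathbb{P}(\sum_n G_n<\infty)$. Since the process starts empty, $G_n\ge H_n$, where $H_n$ is the distance from $f$ to the nearest fitness above $f$ among \emph{all} species born by time $n$, i.e.\ a running minimum.

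But the proof stops at the step you yourself call the crux, and none of your justifications shows $\sum_n G_n=\infty$ almost surely. An averaged bound $\mathbb{E}G_n\gtrsim c/n$ only gives $\sum_n\mathbb{E}G_n=\infty$, which does not imply a.s.\ divergence. A pathwise bound $G_n\ge c/n$ cannot come from $H_n$: $\mathbb{P}(nH_n>y)=(1-py/n)^n\to e^{-py}$, and a tail $0$--$1$ argument then gives $\liminf_n nH_n=0$ a.s. Dominating the number of species in $(f,f+\delta]$ by $\mathrm{Bin}(n,p\delta)$ controls a count, not the position of the nearest point above $f$; a single species can sit arbitrarily close to $f$, so this gives no lower bound on $G_n$. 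Because the comparison series is harmonic, the sum is genuinely borderline and needs a real argument.

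The missing idea is to organise time by the records of the running minimum. Let $R_0=1-f>R_1>R_2>\cdots$ be the successive values of $H_n$, attained at times $T_0=0<T_1<T_2<\cdots$; there are infinitely many, since $H_n\to 0$. Conditionally on $\mathcal{F}_{T_j}$, $T_{j+1}-T_j$ is geometric with parameter $pR_j$, so
\[
\mathbb{P}\bigl(R_j(T_{j+1}-T_j)\ge 1 \bigm| \mathcal{F}_{T_j}\bigr)\ \ge\ (1-pR_j)^{1/R_j}\ \ge\ \bigl(1-p(1-f)\bigr)^{1/(1-f)}>0 .
\]
A second application of L\'evy's lemma shows that infinitely many epochs contribute at least $1$ to $\sum_n H_n$, which closes your argument.

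This is essentially the paper's idea, used more directly. The paper drives births and random-location deaths by the same uniform $U_n$. At a record-minimum time of the $U_n$'s above $f$ (late enough that the record lies below the finitely many fitnesses present when the tagged species was born), the interval $(f,U_n)$ is empty. So if that step is a random-location death, the tagged species is killed outright. R\'enyi's theorem gives infinitely many records, and the independent event type is a random-location death with probability $q(1-r)$ at each record, so this happens almost surely. The record argument needs no summation. Your hazard-rate route, once completed, gives the more quantitative statement that the cumulative killing hazard $\sum_n q(1-r)G_n$ diverges.
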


We again note the similarity between these results and those for the GMS and RPD processes in Sections \ref{sec:GMS-KR} and \ref{sec:RPD-KR}, with the key difference again being the formulas for $f_c$. The next result covers the setting of Theorem \ref{theorem:RLD_survival}, in the special case where $f=1$.

\begin{proposition}\label{prop:grp}
If $p>q$ and there are $k$ species alive with fitness less than 1, then the probability that a species with fitness $f=1$ survives forever is $1-(p/q)^{k+1}$.
\end{proposition}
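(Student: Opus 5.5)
The plan is to reduce the fate of the fitness-$1$ species to a gambler's ruin problem for the total population size $M_n$, in the spirit of the proof sketch \emph{(c)} for the GMS model in Section \ref{sec:GMS_Ln}.

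\textbf{Step 1: when can the fitness-$1$ species die?} Fix a time $n_0$ at which a species with fitness $1$ is alive together with $k$ other species. Every later birth has fitness in $[0,1)$ almost surely, so a.s.\ every other species ever alive has fitness strictly less than $1$. I will check each death mechanism of Section \ref{sec:RLDdef} against this.
\begin{itemize}
\item A least-fit death removes the fitness-$1$ species only if it is the unique species alive.
\item In a death of type (*), the victim is the maximal fitness among species with fitness $<U_n$. Since $U_n<1$ a.s., the fitness-$1$ species is never eligible.
\item In a death of type (**), the least-fit species is removed, which again is the fitness-$1$ species only if it is alone.
\end{itemize}
Hence, a.s., the fitness-$1$ species dies exactly at the first death event occurring at a time when $M_n=1$. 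Equivalently, it dies if and only if $M_n$ started from $k+1$ ever hits $0$.

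\textbf{Step 2: gambler's ruin for $M_n$.} By the notation paragraph, $M_n$ is a random walk reflected at $0$ that moves up with probability $p$ and down with probability $q$. Until it first hits $0$, it behaves as an unreflected simple random walk. Since $p>q$, the classical gambler's ruin formula gives
\[
\mathbb{P}(\text{hit }0\text{ from }k+1)=(q/p)^{k+1}.
\]
So the survival probability is $1-(q/p)^{k+1}$, which is positive. Note that the statement as written has $(p/q)^{k+1}$, which exceeds $1$ when $p>q$ and would make the probability negative; it must be read as $(q/p)^{k+1}$. This reading agrees with Theorem \ref{theorem:gms_survival} at $f=1$.

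\textbf{Main obstacle.} The main care point is Step 1, specifically the null events. One must argue that a.s.\ no newborn ever has fitness exactly $1$ and that $U_n<1$ a.s. Only then is the identification with $M_n$ hitting $0$ exact. Once that identification holds, Step 2 is standard.
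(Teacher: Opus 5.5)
Your proposal is correct and follows the paper's proof. The paper likewise observes that the fitness-$1$ species dies if and only if $M_n$, started from $k+1$, hits $0$, and then invokes the gambler's ruin formula; your case-by-case check of the three RLD death mechanisms just makes explicit what the paper asserts in one line. You are also right that the stated formula has a typo: the gambler's ruin argument gives survival probability $1-(q/p)^{k+1}$, which agrees with Theorem~\ref{theorem:gms_survival} at $f=1$.
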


\begin{remark}\label{rem:no_conj}
Theorem \ref{theorem:RLD_rec} provides insight in the behaviour displayed in the top right of Figure \ref{fig:intro_gms_process_behaviour} where $r=1/2$ and $p=0.50025$. When $p=0.50025$,  $M_n$ grows linearly in $n$ (see Lemma \ref{lemma:finding_transient_c} below), i.e., $M_n \approx n(p-q)=0.0005n$, whereas by Theorem \ref{theorem:RLD_rec} we know that the number of species with fitness in $[0,f]$ cannot grow linearly when $f \leq f^\ddagger_c = 0.998$. Thus, the species alive when $n$ is large must have fitnesses that are overwhelmingly in $[0.998,1]$. This is what we observe in Figure \ref{fig:intro_gms_process_behaviour}.
\end{remark}

\subsection{Representation of $L_n(f)$.}\label{sec:RLD-Ln} The difficulty of analysing this process is due to the transition probabilities of $L_n(f)$. Let 
\[
R_{\min}:= \inf \{ g \in \mathcal{M}_n : g \geq f\} \wedge 1
\]
denote the species with the smallest fitness greater than $f$. Suppose $L_n(f)>0$, then the transition probabilities for $L_n(f)$ are determined by the following (disjoint) events:
\begin{itemize}
    \item A species is born with fitness in $[0,f]$ (i.e., $L_n(f)$ increases by 1) with probability $pf$.
    \item A death happens that target the least-fit (i.e., $L_n(f)$ decreases by 1) with probability $qr$.
    \item A random–location death occurs and targets a species within $[0,f]$ (i.e., $L_n(f)$ decreases by 1) with probability $q(1-r)R_{\min}$.
\end{itemize}
These transition probabilities are depicted in Figure~\ref{fig:L_n^*(f)_RLD}. Note that the dependence on $R_{\min}$ means that this is not a Markov chain. Moreover, extending the state space to two dimensions, as we did for the RPD process, is insufficient: while the pair \((R_{\min},L_n(f))\) does encode the information needed to determine the next transition probabilities $L_n(f)$, it does not encode enough information for those of $R_{\min}$. 
\begin{figure}[h!]
    \centering
    \begin{tikzpicture}[
        node distance=4.5cm,
        state/.style={circle, draw, minimum size=1cm}
    ]
        \node[state] (A0) at (0,0) {0};
        \node[state, right=of A0] (A1) {1};
        \node[state, right=of A1] (A2) {2};
        \begin{scope}
            \clip (-1.5,-2.5) rectangle ($(A2)+(2.2,3.5)$);
            \draw[->, thick] (A0) to[out=30, in=150] node[above] {$pf$} (A1);
            \draw[->, thick] (A1) to[out=30, in=150] node[above] {$pf$} (A2);
            \draw[->, thick] (A2) to[out=30, in=150] 
                node[midway, above] {} ($(A2)+(4.5,0.25)$);
            \draw[->, thick] (A1) to[out=210, in=330] node[below] {$qr+q(1{-}r)R_{\min}$} (A0);
            \draw[->, thick] (A2) to[out=210, in=330] node[below] {$qr+q(1{-}r)R_{\min}$} (A1);
            \draw[->, thick] ($(A2)+(4.5,-0.25)$) 
                to[out=210, in=330] node[midway, below] {} (A2);
            \draw[->, thick] (A0) edge[loop above, looseness=15] node {$1-pf$} (A0);
            \draw[->, thick] (A1) edge[loop above, looseness=15] node {$1-pf-qr-q(1{-}r)R_{\min}$} (A1);
            \draw[->, thick] (A2) edge[loop above, looseness=15] node {$1-pf-qr-q(1{-}r)R_{\min}$} (A2);
        \end{scope}
        \node at ($(A2)+(2.8,0)$) {\dots};
    \end{tikzpicture}
    \caption{Transition probabilities of $L_n(f)$ in the RLD process.}
    \label{fig:L_n^*(f)_RLD}
\end{figure}

The reasoning behind Conjectures \ref{conj:RT} and \ref{conj:U} is the following. When $p>q$ and $f\geq f^\ddagger_c$, we expect more and more species to accumulate in the interval $[f,1]$ as $n \to \infty$. Because $R_{\min}$ is the species with the smallest fitness in $[f,1]$, we thus expect $R_{\min} \stackrel{\mathbb{P}}{\to} f$ as $n \to \infty$. Consequently, for large $n$ and a specific $f \geq f^\ddagger_c$, we believe that the behaviour of $L_n(f)$ is asymptotically well approximated by the Markov chain depicted in Figure \ref{fig:L_n^*(f)_RLD} \emph{with $R_{\min}$ replaced by $f$}. With this modification, the Markov chain is random walk with an absorbing barrier at $0$, and hence the arguments described at the end of Section \ref{sec:GMS_Ln} can be applied. Applying these arguments to this modified Markov chain yields Conjectures \ref{conj:RT} and \ref{conj:U}.

\section{Discussion}\label{sec:discussion}

By considering the GMS, RPD, and RLD processes together, we are able to compare them directly. Consider the following general definition for $f_c$:
\[
f_c = \sup \{ f: |\mathcal{B}(f)| = \infty \text{ almost surely} \} \wedge 1,
\]
where we take the convention that $f_c=1$ when there is no value of $f$ such that $|\mathcal{B}(f)| < \infty$ with positive probability. Recall that in all three models $M_n \equiv L_n(1)$ is random walk with a reflecting barrier at 0 which increases by 1 with probability $p$ and decreases by 1 with probability $q$. For all three models we therefore have $f_c=1$ when $p \leq 1/2$ and $f_c<1$ when $p>1/2$. When $p>1/2$ we proved that $f_c=f^*_c=\frac{q}{p}$ for the GMS process and $f_c=f^\dagger_c=\frac{qr}{p}$ for the RPD process, whereas we conjectured that $f_c=f^\ddagger_c=\frac{qr}{p-q(1-r)}$ for the RLD process. In Figure \ref{fig:comparing_three_models} we plot $f_c$ for the three models when $r=0.5$ assuming this conjecture is correct. Notice that $f_c$ is continuous in $p$ for the GMS and RLD processes, but discontinuous at $p=1/2$ for the RPD process. It is this discontinuity that leads to the contrasting behaviour in Figure \ref{fig:intro_gms_process_behaviour}. We note that Theorem \ref{theorem:RLD_rec} implies $f_c \geq f^\ddagger_c$ for the RLD process and hence we proved that $f_c$ must be continuous at $p=1/2$ for this model. 

\begin{figure}[h!]
    \centering
    \includegraphics[width=0.9\linewidth]{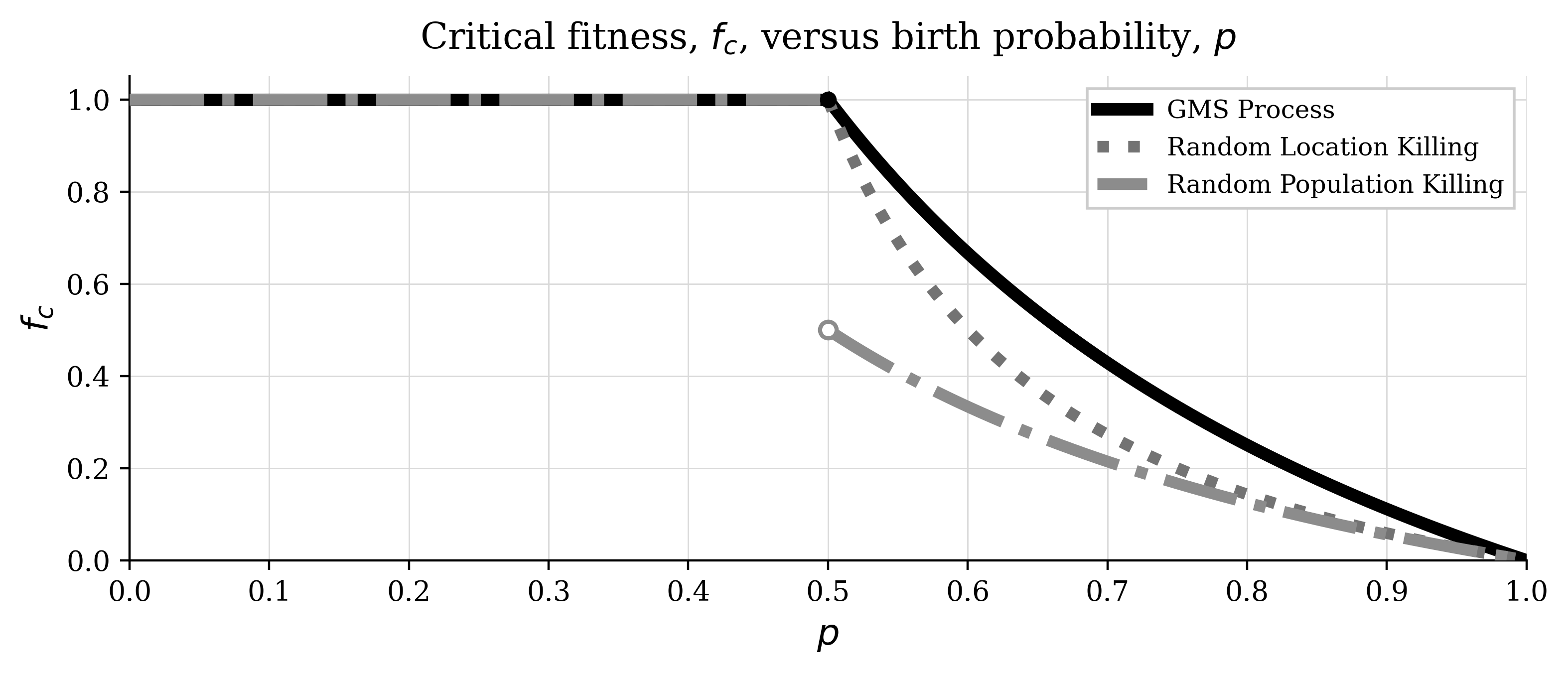}
    \caption{The critical fitness function $f_c$ as a function $p$ for the three models}
    \label{fig:comparing_three_models}
\end{figure}

It is curious that the discontinuity appears for the RPD process but not for the RLD process despite both having similar random death mechanisms. We now offer an intuitive explanation:
\begin{itemize}
\item In the RPD process, a random death event targets each species with equal probability; consequently, a species with fitness 0.999 has \emph{effectively no competitive advantage} over a species with fitness 0.99. Species with exceptionally high fitness are always in danger from a random death event and therefore, when $p \approx q$, they do not accumulate faster than other species with a high fitness. 
\item In the RLD process, a random death event involves simulating a $U_n \sim \text{Uniform}[0,1]$ random variable and killing the species with the maximal fitness less than $U_n$. If no species has fitness less than $U_n$, then the species with the lowest fitness is killed. Suppose the population consists two species, one with fitness 0.999 and another with fitness 0.99, then a random death event kills the species with fitness 0.99 with probability 0.999, and the species with fitness 0.999 with probability 0.001. In other words, species with exceptionally high fitness can effectively ``hide'' behind species with a high fitness (whereas in the RPD proess they cannot). Consequently, species with exceptionally high fitness in the RLD process \emph{do have a competitive advantage} over species with a high fitness. Therefore, when $p\approx q$, species with exceptionally high fitness accumulate faster than other species with a high fitness.
\end{itemize}

Unlike the RPD process, the RLD process similar to the GMS in that they both have continuous $f_c$ functions at $p=1/2$ (Theorems \ref{theorem:gms_theorem}, \ref{theorem:RPD_L_n} and \ref{theorem:RLD_rec}). In contrast, unlike the GMS process, the RLD process is similar to the RPD process in that species with fitness $f<1$ cannot survive indefinitely (Theorems \ref{theorem:gms_survival}, \ref{theorem:RPD_survival} and \ref{theorem:RLD_survival}). Thus, the RLD process can be viewed as being in-between the GMS and RPD processes. This can been seen in Table \ref{tab:comparison-processes} where we summarise the key properties of the processes.

\begin{table}[h!]
\centering
\renewcommand{\arraystretch}{1}
\setlength{\tabcolsep}{15pt}
\begin{tabularx}{0.9\textwidth}{l c c c}
\toprule
\textbf{Process} &
\shortstack[c]{Continuous $f_c(p)$\\ at $p=1/2$} &
\shortstack[c]{Fitness distribution\\[+0.2em] uniform on $[f_c,1]$} &
\shortstack[c]{Species survive\\ forever w.p.p.} \\
\midrule
\textbf{GMS}  & \textcolor{black}{\cmark} & \textcolor{black}{\cmark} & \textcolor{black}{\cmark} \\
\addlinespace[0.4em]
\textbf{RPD}  & \textcolor{black}{\xmark} & \textcolor{black}{\cmark} & \textcolor{black}{\xmark} \\
\addlinespace[0.4em]
\textbf{RLD}  & \textcolor{black}{\cmark} & \textcolor{black!35}{\cmark} & \textcolor{black}{\xmark} \\
\bottomrule
\end{tabularx}
\caption{Established (black) and conjectured (grey) properties of the GMS, RPD and RLD processes.}
\label{tab:comparison-processes}
\end{table}

Our results motivate several future directions of research. It still remains to prove Conjectures \ref{conj:RT} and \ref{conj:U}. In addition, we believe the results in Section \ref{sec:RLD-KR} hold for modifications of the model introduced in Section \ref{sec:RLDdef}; for example, if (*) in Section \ref{sec:RLDdef} is replaced by: If there exists at least one species with fitness \textit{greater than} $U_n$, the species with the \textit{minimum} fitness among those dies. More broadly, we would like to understand if the discontinuity in $f_c$ for the RPD process persists if the birth mechanism of the model is updated to more accurately reflect nature. For example, if the fitness of a new species is chosen so that it is similar to that of an existing species.

\section{Proofs}\label{sec:proofs}

\subsection{Proofs for the RPD process}

\begin{proof}[Proof of Theorem \ref{theorem:RPD_L_n}(i)]
Suppose $f<f^\dagger_c$. We introduce Markov chain labelled $D_n(f)$ with state-space $\mathbb{N}_0$ which is designed to dominate $L_n(f)$. Letting $P(d'):=\mathbb{P}(D_{n+1}(f)=d' \mid D_n(f) = d)$, this Markov chain is characterised by the transition probabilities 
\begin{equation}\label{eqn:D_n}
P(d-1)=qr, \quad P(d)=1-qr-pf \quad \text{and} \quad P(d+1)=pf
\end{equation}
when $d>0$, and $P(d)=1-pf$ and $P(d+1)=pf$ when $d=0$. Note that these are the transition probabilities we obtain for $L_n(f)$ in the RPD process when $M_n \to \infty$. 

We couple $D_n(f)$ and the Markov chain $(M_n,L_n(f))$ for the RPD process, by defining a joint dependence between them while maintaining the same marginal transition probabilities for the two processes. In other words, we define a new Markov chain $(D_n(f), M_n, L_n(f)$) such that, when considered separately, $D_n(f)$ has transition probabilities given by \eqref{eqn:D_n}, and $(M_n,L_n(f))$ has transition probabilities given by \eqref{eqn:NL1} and \eqref{eqn:NL2}. 
We suppose the initial state is $(D_0(f), M_0, L_0(f)) = (0, m_0, 0)$ for $m_0 \in \mathbb{N}_0$.
Let
\[
P(d',m',\ell'):=\mathbb{P}\bigg(D_{n+1}(f)=d',M_{n+1}=m', L_{n+1}(f)=\ell' \; \bigg| \; D_n(f) = d,M_{n}=m, L_{n}(f)=\ell \bigg).
\]
We let the transition probabilities that correspond to births be
\[
P(d+1,m+1, \ell+1) = pf, \qquad P(d,m+1, \ell) = p(1-f)
\]
When we define the transition probabilities that correspond to deaths there are several cases due to boundary conditions (i.e., because the number of species must always be non-negative):
\smallskip
\noindent 
\underline{Case 1:} $d,m,\ell>0$
\[
 P(d-1,m-1, \ell-1) = qr, \qquad P(d,m-1, \ell-1) = q(1-r)\frac{\ell}{m}, \qquad P(d,m-1,\ell) = q(1-r)(1-\frac{\ell}{m}).
\]

\noindent 
\underline{Case 2:} $d,m>0$, $\ell=0$
\[
 P(d-1,m-1, 0) = qr, \qquad  P(d,m-1, 0) = q(1-r)
\]

\noindent 
\underline{Case 3:} $d>0$, $m=\ell=0$
\[
 P(d-1,0, 0) = qr, \qquad  P(d,0, 0) = q(1-r)
\]

\noindent 
\underline{Case 4:} $d=0$, $m>0$, $\ell=0$
\[
P(0,m-1, 0) = q
\]

\noindent 
\underline{Case 5:} $d=0$, $m=0$, $\ell=0$
\[
P(0,0, 0) = q
\]
We can directly verify that $D_n(f)$, and $(M_n,L_n(f))$ have the transition probabilities in \eqref{eqn:D_n} and, \eqref{eqn:NL1} and \eqref{eqn:NL2} respectively. In addition, excluding boundary cases 3 and 4 where $d>0$ and $\ell=0$, in ($D_n(f)$, $M_n$, $L_n(f)$), the increment of $D_n(f)$ (i.e. $D_{n+1}(f)-D_n(f))$ is always greater than or equal to the increment of $L_n(f)$. Consequently, for any $m_0\geq 0$, given $(D_n(f), M_n, L_n(f))=(0,m_0,0)$, we have $D_n(f) \geq L_n(f)$ with probability 1 for all $n \geq 0$.

To establish the result, first observe that by applying the Markov property for the chain $(M_n,L_n(f))$ in the second step we have
\begin{align}
\mathbb{E}(B_{i+1}-B_i) &= \sum_{m_0=0}^\infty \mathbb{E}(B_{i+1} - B_i \mid M_{B_i}=m_0)\mathbb{P}(M_{B_i}=m_0) \nonumber \\
&\leq \sup_{m_0 \geq 0} \mathbb{E}(B_1-B_0 \mid M_0=m_0). \label{eqn:thm3i1}
\end{align}
Next, let 
\begin{equation}\label{eq:Adef}
    \mathcal{A}(f)=\{n \geq 0 : D_n(f)=0\}
\end{equation} and denote the elements of $\mathcal{A}(f)$ by $0=A_0< A_1<A_2 < \dots$. 
Recall that for any $M_0=m_0$, $D_n(f) \geq L_n(f)$ for all $n \geq 0$. Consequently if $D_n(f) =0$ then $L_n(f)=0$, which implies $\mathcal{A}(f) \subseteq \mathcal{B}(f)$, and therefore $B_1 \leq A_1$ with probability 1. Given $A_0=B_0=0$, this means
\begin{equation}\label{eqn:thm3i2}
\sup_{m_0 \geq 0} \mathbb{E}(B_1-B_0 \mid M_0=m_0) \leq \mathbb{E}(A_1-A_0 ).
\end{equation}
Finally, recall that $D_n(f)$ is a simple birth-death Markov chain with transition probabilities given by \eqref{eqn:D_n}. Note that $f<f^\dagger_c$ implies that $pf<qr$, and hence $D_n(f)$ is positive recurrent and therefore has a stationary distribution. Applying the standard formula, we obtain that the stationary probability that the chain $D_n(f)$ is in state 0 is $1-\frac{pf}{qr} = \frac{f^\dagger_c-f}{f^\dagger_c}$. Recalling that the stationary probability of a state in a Markov chain is the reciprocal of the return time, we obtain
\begin{equation}\label{eqn:thm3i3}
\mathbb{E}(A_1-A_0 ) = \frac{f^\dagger_c}{f^\dagger_c-f}.
\end{equation}
Combining \eqref{eqn:thm3i1}, \eqref{eqn:thm3i2}, and \eqref{eqn:thm3i3} then yields the result.
\end{proof}

\begin{proof}[Proof of Theorem \ref{theorem:RPD_L_n}(ii)]
Suppose $f=f^\dagger_c=\frac{qr}{p}$. Recall the coupled chain $(D_n(f),M_n, L_n(f))$ from the proof of Theorem \ref{theorem:RPD_L_n}(i). When $f=f^\dagger_c$ the step forward probability for $D_n(f)$ in \eqref{eqn:D_n} is $pf$ which is equal to the step back probability $qr$, and hence $D_n(f)$ is null-recurrent. Recalling the definition of $\mathcal{A}(f)$ from \eqref{eq:Adef}, we then have $|\mathcal{A}(f)|=\infty$ a.s. and $\mathbb{E}(A_1-A_0)=\infty$. As in the proof of Theorem \ref{theorem:RPD_L_n}(i), we have $\mathcal{A}(f) \subseteq \mathcal{B}(f)$ and therefore $|\mathcal{B}(f)|=\infty$ a.s., thereby proving the first assertion.

To show that $\mathbb{E}(B_{i+1}-B_i) \to \infty$ as $i \to \infty$, we note that $M_n \to \infty$ a.s. as $n \to \infty$. Indeed, this follows from the stronger result that $\frac{M_n}{n}\to p-q$ as $n \to \infty$ a.s. (see Lemma \ref{lemma:finding_transient_c} below). Consequently, using the Markov property in this first step, if we can show that the second equality holds in
\begin{equation}
\lim_{m_0\to \infty} \mathbb{E}(B_{i+1} - B_i \mid M_{B_i}=m_0) = \lim_{m_0 \to \infty} \mathbb{E}(B_{1} - B_0 \mid M_{0}=m_0) = \infty.
\end{equation}
then this implies the result.
Recalling that $D_0(f)=L_0(f)=0$, let $T = \inf \{ n : D_n(f) \neq L_n(f)\}$. 
Given the transition probabilities of $(D_n(f), M_n, L_n(f))$ described above in the proof of Theorem \ref{theorem:RPD_L_n}(i) and that $M_0=m_0$, if we suppose that $m_0>n$ then we have that $\mathbb{P}(T=n+1 \mid T\geq n) \leq \frac{n}{m_0-n}$ independently of the history of the process. 
This is because if $T\geq n$, then $L_{n}(f)=D_n(f)$ and it is only in Case 1 in the coupling described above where it is possible that $L_{n+1}(f)\neq D_{n+1}(f)$ and this occurs with probability $q(1-r)L_{n}(f)/M_n$. More rigorously, using the law of total probability in the first step and the fact that at most one species can be born/killed at each time step in the second, we have
\begin{align*}
\mathbb{P}(T=n+1 &\mid T \geq n, \, M_0=m_0) \\ 
&= \sum_{\ell=0}^\infty \sum_{m'=0}^\infty q(1-r)\frac{\ell}{m'} \mathbb{P}(L_n(f)=\ell, \, M_n=m' \mid T >n, \, M_0=m_0) \\
&=\sum_{\ell=0}^n \sum_{m'=m_0-n}^{m_0+n} q(1-r)\frac{\ell}{m'} \mathbb{P}(L_n(f)=\ell, \, M_n=m' \mid T \geq n, \, M_0=m_0) \\
&\leq q(1-r) \frac{n}{m_0-n}.
\end{align*}
Consequently, for any fixed $c < m_0$, we have 
\begin{align*}
\mathbb{P}(T \leq c \mid M_0 = m_0) &= 1 - \mathbb{P}(T > c \mid M_0 = m_0) \\
&= 1 - \prod_{n=0}^{c-1} (1- \mathbb{P}(T=n+1\mid T \geq n, \, M_0=m_0)) \\
&\leq 1-\left(1 - q(1-r)\frac{c}{m_0-c}\right)^c,
\end{align*}
which converges to 0 as $m_0 \to \infty$. 
We thus have
\begin{align*}
\lim_{m_0 \to \infty} \mathbb{E}(B_{1}-B_0 \mid M_n=m_0) &\geq \lim_{m_0 \to \infty} \mathbb{E}(\min\{A_{1}-A_0, \, T\} \mid M_0=m_0) \\
&\geq \lim_{c \to \infty} \lim_{m_0 \to \infty} \mathbb{E}(\min\{ A_{1}-A_0, \, c\}\mathbf{1}\{ T>c\} \mid M_0 = m_0) \\
&\geq \lim_{c \to \infty} \lim_{m_0 \to \infty} \mathbb{E}(\min\{ A_{1}-A_0, \, c\} - c\mathbf{1}\{T \leq c\} \mid M_0 = m_0) \\
&\geq \lim_{c \to \infty} \mathbb{E}(\min\{ A_{1}-A_0, \, c\}) -  \lim_{c \to \infty} \lim_{m_0 \to \infty}  \mathbb{E}( c\mathbf{1}\{T \leq c\} \mid M_0 = m_0) \\
&=\lim_{c \to \infty}  \mathbb{E}(\min\{ A_{1}-A_0, \, c\}) \\
&= \mathbb{E}(A_1 - A_0) \\
&= \infty,
\end{align*}
where in the second last step we apply the monotone convergence convergence theorem and in the last we recall that $\{D_n: n \in \mathbb{N}_0\}$ is null-recurrent when $f=f^\dagger_c$.
\end{proof}

We defer the proof of Theorem \ref{theorem:RPD_L_n}(iii) until after the proof of Theorem \ref{theorem:RPD_Uniformity}. This is because the proof of Theorem \ref{theorem:RPD_L_n}(iii) uses some of the results developed in the proof of Theorem \ref{theorem:RPD_Uniformity}. 

We break the proof of Theorem \ref{theorem:RPD_Uniformity} into several steps. Before beginning we provide an overview of the proof strategy. First note that by Slutsky's theorem it is sufficient to establish that 
\begin{equation}\label{eq:thm32gl}
\frac{L_n(f)}{n} \stackrel{\mathbb{P}}{\to}(p-q) \left( \frac{f-f^\dagger_c}{1-f^\dagger_c} \right)
\end{equation}
for $f \in (f^\dagger_c,1]$ as $n \to \infty$.
Rather than establishing \eqref{eq:thm32gl} directly, we instead proceed by establishing a functional law of large numbers for the process $(\mathbf{X}^n(t))_{t\in[0,1]}$ where
\[
\mathbf{X}^n(t) = \begin{bmatrix}
    X^n_1(t) \\[10pt]
    X^n_2(t)
\end{bmatrix} := \begin{bmatrix}
    \dfrac{M_{\lfloor nt \rfloor}}{n} \\[10pt]
    \dfrac{L_{\lfloor nt \rfloor}(f)}{n}
\end{bmatrix}, 
\]
as $n \to \infty$. As a consequence, we obtain a (uni-variate) law of large numbers for $\frac{L_n(f)}{n}$ by setting $t=1$, and ignoring the component which concerns $M_n$. This approach helps us to deal with the fact that $(M_n, L_n(f))$ is a density–dependent Markov chain, i.e. its transition probabilities (illustrated in Figure~\ref{fig:triangular_grid_chain}) depend heavily on the current state of the process. 
We can establish the functional law of large numbers by applying a general result from Kurtz \cite[Theorem 4.7]{FluidLimitKurtz}. However, to apply this result, the transition probabilities of the Markov chain ($M_n,L_n(f)$) must satisfy certain Lipschitz continuity conditions. It turns out that these Lipschitz continuity conditions are not satisfied on the boundary where $L_{n}(f) = 0$ (due to the term $qr\mathbf{1}\{m >0, \ell=0\}$ in \eqref{eqn:NL2}). This leads us to split the interval $t\in[0,1]$ into two parts: (1) $t \in [0, \epsilon]$, where our focus is on showing that for any $\epsilon>0$ there exists $\eta(\epsilon)>0$ such that $\frac{L_{\lfloor nt \rfloor}(f)}{n} > \eta(\epsilon)$ for some $t \in [0,\varepsilon]$ with high probability, and (2) $t\in [\epsilon,1]$, where we are then able to apply \cite[Theorem 4.7]{FluidLimitKurtz} because we no longer need to be concerned about the $L_n(f)=0$ boundary (and hence we are able to ignore the term $qr\mathbf{1}\{n >0, \ell=0\}$ in \eqref{eqn:NL2}). The result then follows by taking the resulting fluid limit $\mathbf{X}^n \Rightarrow \mathbf{x}$ which is expressed as a differential equation, and showing that when $\epsilon \downarrow 0$ we obtain $\mathbf{x}(1) \to \left[p-q, (p-q)\frac{f-f^\dagger_c}{1-f^\dagger_c}\right]^\top$. With our general approach in mind, the proof is structured as follows:
\begin{enumerate}
\item Show the process leaves the $\frac{L_{\lfloor nt \rfloor}(f)}{n}=0$ boundary during $t \in [0,\varepsilon]$ (Lemmas \ref{lemma:finding_transient_c} and \ref{lem:0eps}).
\item Establish a functional law of large numbers for $(\mathbf{X}^n(t))_{t \in [\varepsilon,1]}$ (Lemma \ref{lem:fluid}).
\item Use Steps 1 and 2 to prove Theorem \ref{theorem:RPD_Uniformity}.
\end{enumerate}

\begin{lemma}
For $p>q$,
    \[
    \lim_{n\to\infty} \frac{M_n}{n} = p-q \quad\quad\text{a.s.}
    \]
    \label{lemma:finding_transient_c}
\end{lemma}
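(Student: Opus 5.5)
The plan is to use the reflected random walk representation already used for the GMS process in \eqref{eqn:decomp}, now with $f=1$. As noted at the end of Section \ref{ch:introduction}, in all three models $\{M_n\}$ is a random walk with a reflecting barrier at $0$. It steps up with probability $p$, steps down with probability $q$ when $M_n>0$, and stays put with probability $q$ when $M_n=0$. So I would realise $M_n$ on a common probability space with i.i.d.\ increments $\xi_i$ satisfying $\mathbb{P}(\xi_i=1)=p$ and $\mathbb{P}(\xi_i=-1)=q$. With $S_n=\sum_{i=1}^n \xi_i$ and $S_0=0$, the claim is
\[
M_n = S_n - \min_{0\le k\le n} S_k .
\]
This identity is verified by induction on $n$. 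If $M_n>0$ then $S_n>\min_{k\le n}S_k$, so the running minimum does not change at step $n+1$ and both sides move by $\xi_{n+1}$. If $M_n=0$, then $S_n$ is the current minimum. An up-step leaves the minimum unchanged, so both sides become $1$. A down-step lowers the minimum to $S_{n+1}$, so the right-hand side stays at $0$, matching the reflection. Since the lemma concerns only the law of $M_n$, and ``a.s.'' convergence is a property of the law of the whole sequence, it suffices to prove the result for this construction.

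Next I would apply the strong law of large numbers: $S_n/n\to p-q$ a.s. It remains to show $\frac1n\min_{0\le k\le n}S_k\to0$ a.s. Since $p>q$, $S_n\to+\infty$ a.s., so the global minimum $I:=\inf_{k\ge0}S_k$ is an a.s.\ finite random variable. In fact $-I$ is geometric, with $\mathbb{P}(I\le -j)=(q/p)^j$ by the gambler's ruin formula, although finiteness alone suffices. Then $0\ge \min_{0\le k\le n}S_k\ge I$ for every $n$, so dividing by $n$ gives the limit $0$ almost surely. Combining the two limits yields $M_n/n\to p-q$ a.s.

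The only step requiring any care is the finiteness of $I$. It follows from $S_n\to\infty$ a.s., which is itself immediate from the SLLN because $p-q>0$: only finitely many $S_k$ are negative, so the infimum is a minimum over a finite set. Nothing here is a genuine obstacle. The argument is the $f=1$ case of step (b) in Section \ref{sec:GMS_Ln}, with $pf-q=p-q$.
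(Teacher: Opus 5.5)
Your proposal is correct and follows essentially the same route as the paper: the reflected-walk representation $M_n = S_n - \min_{0\le k\le n} S_k$ (the $f=1$ case of \eqref{eqn:decomp}), the strong law for $S_n/n$, and the fact that the running minimum is bounded below by an a.s.\ finite random variable, so it vanishes after dividing by $n$. Your explicit pathwise construction of $M_n$ from the increments, with the induction check, is a welcome tightening. The paper states \eqref{eqn:decomp} only as an equality in distribution for each fixed $n$, and that alone does not transfer almost sure convergence.
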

\begin{proof}
Note that $M_n$ has the same distribution in both the RPD and GMS processes. Further, recall that $M_n \equiv L_n(1)$. We can therefore apply the decomposition \ref{eqn:decomp}. By the strong law of large numbers we have $\frac{S_n}{n} \stackrel{a.s.}{\to} p-q$. This further implies that, with probability 1, there exists $N <\infty$ such that $S_k \geq 0$ for all $k \geq N$. Consequently, $\frac{\inf_{k \leq n} S_k}{n} \stackrel{a.s.}{\to} 0$. Combining these two facts then gives the result.
\end{proof}

\begin{lemma}\label{lem:0eps}
For the RPD process, if $f>f^\dagger_c$ then $\limsup_{n \to \infty} \frac{L_n(f)}{n} >0$ almost surely.
\end{lemma}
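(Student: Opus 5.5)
We need to show that $\limsup_{n\to\infty} L_n(f)/n > 0$ almost surely when $f>f^\dagger_c$. The plan is to compare $L_n(f)$ from below with a process whose linear growth is easy to establish.

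\textbf{Reduction.} Argue by contradiction. Suppose that on an event $E$ of positive probability, $L_n(f)/n\to 0$. On $E$, Lemma \ref{lemma:finding_transient_c} gives $M_n/n\to p-q>0$. Hence the ratio $L_n(f)/M_n\to 0$ on $E$, so for any $\delta>0$ there is a (random) $N$ with $L_n(f)\le \delta M_n$ for all $n\ge N$.

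\textbf{Comparison after time $N$.} For $n\ge N$ the per-step drift of $L_n(f)$ is
\[
\mathbb{E}(L_{n+1}(f)-L_n(f)\mid \mathcal{F}_n) \ge pf - qr - q(1-r)\delta .
\]
Since $pf>qr$, this lower bound is a positive constant $\mu$ once $\delta$ is small. The increments are bounded by $1$. The natural tool is Azuma/Freedman together with Borel--Cantelli, or the martingale SLLN, applied to
\[
Z_n := \sum_{k<n}\bigl(\Delta L_k - \mathbb{E}(\Delta L_k\mid\mathcal{F}_k)\bigr),
\]
where $\Delta L_k := L_{k+1}(f)-L_k(f)$. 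The SLLN gives $Z_n/n\to0$ a.s. Writing $L_n(f)=L_N(f)+\sum_{k=N}^{n-1}\mathbb{E}(\Delta L_k\mid\mathcal F_k)+(Z_n-Z_N)$, we get $\liminf_n L_n(f)/n\ge \mu>0$ on $E$. This contradicts $L_n(f)/n\to0$ on $E$.

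\textbf{The zero boundary.} The drift bound above is only valid when $L_n(f)>0$. When $L_n(f)=0$ the drift is $pf>0$, which is even larger. So the bound $\mathbb{E}(\Delta L_k\mid\mathcal F_k)\ge\mu$ holds in all states for $k\ge N$, using only $L_k(f)/M_k\le\delta$.

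\textbf{Conclusion and main obstacle.} The argument gives $\liminf_n L_n(f)/n>0$ a.s. whenever $L_n(f)/M_n$ is eventually small. A bit more care upgrades this to the claimed statement. On the event $\{\limsup_n L_n(f)/n=0\}$ we have $L_n(f)/n\to0$, which is exactly the event $E$ handled above. On that event the drift is eventually at least $\mu$, so the martingale decomposition forces $\liminf_n L_n(f)/n\ge\mu$, a contradiction. Therefore $\mathbb{P}(\limsup_n L_n(f)/n=0)=0$.

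The step needing the most care is the martingale SLLN for bounded-increment martingales. I would use Azuma's inequality, which gives $\mathbb{P}(|Z_n|>n^{3/4})\le 2e^{-\sqrt n/2}$, and then Borel--Cantelli.

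A second point needing care is that $N$ is random. This is harmless because the drift bound is checked pathwise at each $k\ge N$, while $Z_n$ is defined globally, independently of $N$.
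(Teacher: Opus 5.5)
Your proof is correct and follows essentially the same route as the paper: argue by contradiction on $\{L_n(f)/n\to 0\}$, use Lemma~\ref{lemma:finding_transient_c} to make the uniform-kill rate in $[0,f]$ negligible there, and invoke a martingale law of large numbers to force growth at rate at least $pf-qr>0$. The only difference is bookkeeping. The paper splits $L_n(f)$ into birth, least-fit-kill and random-kill counts, applying the classical SLLN to the first two and using $TLF_n(f)\le TLF_n$ to sidestep the $L_n(f)=0$ boundary, whereas you apply a single Doob decomposition to $L_n(f)$ and handle that boundary through the drift $pf$ at zero.
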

\begin{proof}
Let $f>f^\dagger_c$. We establish the result by contradiction. Suppose that 
\begin{equation}\label{eqn:contass}
\lim_{n \to\infty} \frac{L_n(f)}{n} = 0 \qquad \text{with positive probability.}
\end{equation}
Let
\begin{align*}
TB_n(f) &= \text{ the total number of species born in $[0,f]$ up to time $n$} \\
TLF_n &= \text{ the total number of least fit kills up to time $n$} \\
TLF_n(f) &= \text{ the total number of least fit kills that kill a species in $[0,f]$ up to time $n$} \\
TR_n(f) &= \text{ the total number of random kills that kill a species in $[0,f]$ up to time $n$} 
\end{align*}
By the strong law of large numbers we have
\begin{equation}\label{eq:nTPr}
\frac{TB_n(f)}{n} \stackrel{a.s.}{\to} pf, \qquad \text{and} \qquad \frac{TLF_n}{n} \stackrel{a.s.}{\to} qr \qquad \text{as } n \to \infty.
\end{equation}
Observing that the number of species in $[0,f]$ at time $n$ is the number born in $[0,f]$ minus the number killed in $[0,f]$ we have
\begin{align}
L_n(f) &= TB_n(f) - TLF_n(f) - TR_n(f) \nonumber \\
&\geq TB_n(f) - TLF_n - TR_n(f) , \label{eq:lntb}
\end{align}
where we obtain the inequality by noting that the $TLF_n \geq TLF_n(f)$ by definition. Combining \eqref{eqn:contass}, \eqref{eq:nTPr}, and \eqref{eq:lntb} we conclude that
\begin{equation}\label{eq:lntr2}
\text{If} \quad  \lim_{n \to\infty} \frac{L_n(f)}{n} = 0 \quad \text{then} \quad \liminf_{n \to \infty}\frac{TR_n(f)}{n} \geq pf - qr \quad \text{almost surely.}
\end{equation}
Observe in addition that $f>f^\dagger_c$ is equivalent to $pf-qr>0$. Applying Lemma \ref{lemma:finding_transient_c} and Equation \eqref{eq:lntr2} gives
\begin{align}
    \mathbb{P}\left( \lim_{n \to\infty} \frac{L_n(f)}{n} = 0\right) &= \mathbb{P}\left( \lim_{n \to\infty} \frac{L_n(f)}{n} = 0, \; \liminf_{n \to \infty}\frac{TR_n(f)}{n} \geq pf - qr, \; \lim_{n \to \infty} \frac{M_n}{n}=p-q \right) \nonumber \\
    &=\mathbb{P}\left(\left. \liminf_{n \to \infty}\frac{TR_n(f)}{n} \geq pf - qr \; \right| \;  \lim_{n \to\infty} \frac{L_n(f)}{n} = 0,  \; \lim_{n \to \infty} \frac{M_n}{n}=p-q \right) \nonumber \\
    &\qquad \times \mathbb{P}\left( \lim_{n \to\infty} \frac{L_n(f)}{n} = 0, \; \lim_{n \to \infty} \frac{M_n}{n}=p-q \right) \label{eq:nt0}
\end{align}
To establish the contradiction to \eqref{eqn:contass} and complete the proof we need to show that the right-hand-side of \eqref{eq:nt0} is 0.
Let $I_k:=\mathbf{1}\{\text{at time $k$ there is a random kill that kills a species in $[0,f]$}\}$ so that $TR_n(f)=\sum_{k=1}^n I_k$. Conditional on the history of the process up to time $k-1$ (call this $\mathcal{F}_k$) the probability of a random kill in $[0,f]$ at time $k$ is $q \frac{L_k(f)}{M_k}$, that is,
\[
\mathbb{E}(I_k \mid \mathcal{F}_{k-1}) = q \frac{L_k(f)}{M_k}.
\]
On the event $\{L_n(f)/n \to 0\} \cap \{ M_n/n \to p-q\}$ we have $L_k(f)/M_k \to 0$, and hence 
\begin{equation}\label{eq:Ikmean}
\frac{1}{n}\sum_{k=1}^{n} \mathbb{E}(I_k \mid \mathcal{F}_{k-1}) \to 0
\end{equation}
on this same event. Considering the same event again, applying the martingale law of large numbers we have
\[
\frac{1}{n}\sum_{k=1}^{n} [I_k - \mathbb{E}(I_k \mid \mathcal{F}_{k-1})] \to 0\quad a.s.
\]
as $n \to \infty$. Given \eqref{eq:Ikmean} this implies that 
\[
\lim_{n \to \infty} \frac{TR_n(f)}{n} = \lim_{n \to \infty} \frac{1}{n} \sum_{k=1}^n I_k = 0 \quad a.s.
\]
 as $n \to \infty$ on $\{L_n(f)/n \to 0\} \cap \{ M_n/n \to p-q\}$. Hence the right-hand-side of \eqref{eq:nt0} is 0, which establishes the contradiction to \eqref{eqn:contass}.  
\end{proof}

\begin{lemma}\label{lem:fluid}
Suppose $f > f^\dagger_c$. If $\mathbf{X}^n(0) \to (m_0,\ell_0)^\top$, where $m_0 \geq \ell_0>0$, then for every $\delta,t>0$ 
\begin{equation}\label{eqn:FLLg}
\lim_{n\to\infty}\ \mathbb{P}\!\left\{\ \sup_{0\le s\le t}\ \|\mathbf{X}^n(s)-\mathbf{x}(s)\|>\delta\ \right\}=0,
\end{equation}
where
\begin{equation}\label{eq:Xdeflem}
\mathbf{x}(s) = \begin{bmatrix}
m_0 + (p-q)s \\[+1mm]
\ell_0 \left( \frac{m_0}{m_0 + (p-q)s} \right)^{\alpha} + \left(\frac{f-f^\dagger_c}{1-f^\dagger_c}\right) \left( m_0 + (p-q)s - \frac{m_0^{\alpha +1}}{(m_0 + (p-q)s)^{\alpha}} \right)
\end{bmatrix}
\end{equation}
and $\alpha=\frac{q(1-r)}{p-q}$.
\end{lemma}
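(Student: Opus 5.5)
We will apply Kurtz's theorem for density-dependent Markov chains \cite[Theorem 4.7]{FluidLimitKurtz} to the chain $(M_n,L_n(f))$ scaled by $n$. The jump vectors are $(1,1)$, $(1,0)$, $(-1,-1)$ and $(-1,0)$. Away from the boundaries $\{\ell=0\}$ and $\{m=0\}$, the transition probabilities in \eqref{eqn:NL1}--\eqref{eqn:NL2} depend on the state only through the ratio $\ell/m$. With $x=(x_1,x_2)=(m/n,\ell/n)$ they are
\[
\beta_{(1,1)}(x)=pf,\quad \beta_{(1,0)}(x)=p(1-f),\quad \beta_{(-1,-1)}(x)=qr+q(1-r)\tfrac{x_2}{x_1},\quad \beta_{(-1,0)}(x)=q(1-r)\bigl(1-\tfrac{x_2}{x_1}\bigr).
\]
The drift is therefore
\[
F(x)=\Bigl(p-q,\; pf-qr-q(1-r)\tfrac{x_2}{x_1}\Bigr)^\top .
\]
Discrete time steps of size $1/n$ can be handled either by the discrete-time version of Kurtz's result or by Poissonising time, since the two time scales differ by $o(1)$ uniformly on compacts by the LLN for Poisson processes.

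The first step is to restrict to a compact region where $F$ is Lipschitz. Because $\ell_0>0$ and $m_0\ge\ell_0$, the candidate solution $\mathbf{x}(s)$ on $[0,t]$ stays inside a compact set
\[
K=\{x: x_1\ge m_0/2,\; x_2\ge c\}
\]
for some $c>0$. We will check that $x_2(s)$ stays bounded below: if $x_2$ approached $0$, its derivative there would be $pf-qr>0$, since $f>f^\dagger_c$. On a $\delta'$-neighbourhood $K'$ of $K$, the rates agree with the smooth formulas above; the indicator term $qr\mathbf{1}\{\ell=0\}$ never activates there, and $1/x_1$ is bounded. So $F$ is Lipschitz and bounded on $K'$. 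Extend the rates outside $K'$ to globally Lipschitz bounded functions (for instance by truncating $x_2/x_1$ to $[0,1]$ and replacing $x_1$ by $\max(x_1,m_0/4)$). Kurtz's theorem then gives \eqref{eqn:FLLg} for the modified chain, with limit the solution of $\dot{\mathbf{x}}=F(\mathbf{x})$, $\mathbf{x}(0)=(m_0,\ell_0)$. The modified and true chains coincide up to the exit time of $K'$. On the event $\sup_{s\le t}\|\mathbf{X}^n(s)-\mathbf{x}(s)\|\le\delta'$ that exit never happens, so the statement transfers to the true chain; for $\delta>\delta'$ the claim follows by monotonicity.

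The second step is to solve the ODE explicitly. The first component gives $x_1(s)=m_0+(p-q)s$. The second component is the linear ODE
\[
\dot x_2=(pf-qr)-\frac{q(1-r)}{x_1(s)}\,x_2 .
\]
Its integrating factor is $(x_1(s)/m_0)^{\alpha}$ with $\alpha=q(1-r)/(p-q)$. Integration gives
\[
x_2(s)=\ell_0\Bigl(\tfrac{m_0}{x_1}\Bigr)^{\alpha}+\frac{pf-qr}{(p-q)(1+\alpha)}\bigl(x_1-m_0^{\alpha+1}x_1^{-\alpha}\bigr).
\]
Since $(p-q)(1+\alpha)=p-qr=p(1-f^\dagger_c)$, the coefficient equals $\frac{f-f^\dagger_c}{1-f^\dagger_c}$, which matches \eqref{eq:Xdeflem}. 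From this formula $x_2>0$ and $x_2\le x_1$ on $[0,t]$ follow, which justifies the choice of $K$.

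The main obstacle is the careful localisation in the first step. Kurtz's theorem needs global Lipschitz and bounded-rate hypotheses, but the true rates are discontinuous at $\ell=0$ and singular at $m=0$. The stopping-time and modification argument above is what handles this, together with verifying that the explicit solution keeps a positive distance from both boundaries on $[0,t]$.
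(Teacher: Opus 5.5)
Your proposal is correct and follows essentially the same route as the paper. Both apply Kurtz's fluid-limit theorem after localising to a region bounded away from the $\ell=0$ boundary, where the indicator in the drift breaks Lipschitz continuity, and both then solve the linear ODE for $x_2$ with the integrating factor $(x_1(s)/m_0)^{\alpha}$, using $(p-q)(1+\alpha)=p-qr$ to identify the coefficient. Your localisation is in fact more careful than the paper's: you take the lower bound $c$ below $\min_{s\le t}x_2(s)$ and spell out the stopping/modification step, whereas the paper's set $\{m\ge\ell\ge\ell_0/2\}$ need not contain a tube around the trajectory, since $x_2(s)$ can drop below $\ell_0/2$ for some parameters and large $t$.
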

\begin{proof}
First observe that for any $s \in \mathbb{R}$ and $n \in \mathbb{N}$, $\mathbf{X}^n(s)$ is a random variable on $\mathcal{S} = \{(m,\ell) \in \mathbb{R}^{2}: m \geq \ell \geq 0\}$.
Following \cite{FluidLimitKurtz}, define the drift function, $\mathbf{f}_n(\mathbf{x})$, where $\mathbf{x}$ is a state in the state space,
\begin{align*}
    \mathbf{f}_n(\mathbf{x}) &:= n \cdot \mathbb{E}[\mathbf{X}^n(\frac{1}{n}) - \mathbf{X}^n(0) | \mathbf{X}^n(0) = \mathbf{x}].
\end{align*}
Using the transition probabilities described in \eqref{eqn:NL1} and \eqref{eqn:NL2}, and the notation $\mathbf{x} = (m_0, \ell_0)^\top$, we obtain 
\[
\mathbf{f}_n(\mathbf{x}) = n\begin{bmatrix} 
\frac{1}{n} p - \frac{1}{n}q \mathbf{1}\{m_0>0\} \\
\frac{1}{n} pf - \frac{1}{n} q(r + (1-r)\frac{\ell_0}{m_0})\mathbf{1}\{\ell_0>0\}
\end{bmatrix} 
= \begin{bmatrix} 
 p - q \mathbf{1}\{m_0>0\} \\
pf - q(r + (1-r)\frac{\ell_0}{m_0})\mathbf{1}\{\ell_0>0\}
\end{bmatrix} =:\mathbf{f}(\mathbf{x})
\]
If we can apply \cite[Theorem 4.7]{FluidLimitKurtz} then this implies that \eqref{eqn:FLLg} holds with $\mathbf{\mathbf{x}}(s)$ replaced by $\mathbf{x}^*(s)$, where $\mathbf{x}^*(s)$ is characterised by 
\begin{equation}\label{eqn:Xstdef}
\mathbf{x}^*(0)=(m_0,\ell_0) \qquad \text{and} \qquad \frac{\partial}{\partial s} \mathbf{x}^*(s) = \mathbf{f}(\mathbf{x}^*(s)).
\end{equation}
We therefore have two steps to do: {Step 1}, determine if/when the conditions of \cite[Theorem 4.7]{FluidLimitKurtz} hold, and {Step 2},  show that $\mathbf{x}^*(s)$ and $\mathbf{x}(s)$ defined in \eqref{eq:Xdeflem} are equivalent.

{Step 1:} To apply \cite[Theorem 4.7]{FluidLimitKurtz} we require that there exists a set $\mathcal{K} \subseteq \mathcal{S}$  such that: (i) that $\lim_{n\to\infty}\ \sup_{\mathbf{x}\in \mathcal{K}}\ \|\mathbf{f}_n(\mathbf{x})-\mathbf{f}(\mathbf{x})\|\;=\;0$ for all $\mathbf{x} \in \mathcal{K}$, which is immediately satisfied because $\mathbf{f}_n(\mathbf{x})$ does not depend on $n$; (ii) that the conditions of \cite[Corollary 2.2]{FluidLimitKurtz} hold, which are effectively bounds on the probability that $\mathbf{X}^n(s)$ has large single-step jumps and are immediately satisfied by observing that only one death/birth can happen at each time step; (iii) that a open ball around the $(\mathbf{x}^*(s))_{0 \leq s \leq t}$ belongs to $\mathcal{K}$, i.e., there exists $\eta >0$ such that $\{ \mathbf{y} \in \mathbb{R}^2 : \inf_{s \leq t} |\mathbf{y} - \mathbf{x}(s)| \leq \eta\} \subset \mathcal{K}$ (see \cite[Theorem 2.11]{FluidLimitKurtz}), and (iv) a Lipshitz continuity condition:
there exists $M>0$ such that
\[
\|\mathbf{f}(\mathbf{x})-\mathbf{f}(\mathbf{y})\|\;\le\;M\,\|\mathbf{x}-\mathbf{y}\|\qquad\text{for all }\mathbf{x},\mathbf{y}\in \mathcal{K}.
\]

Observe that if we take $\mathcal{K} = \mathcal{S}$ then the Lipshitz continuity condition is not satisfied due to the indicators in the expression for $\mathbf{f}(\mathbf{x})$. We therefore let $\mathcal{K} := \{(m,\ell) \in \mathbb{R}^{2}: m \geq \ell \geq \ell_0/2\}$.
In this case, we can take the Lipshitz constant to be $M=2\sqrt{2}/\ell_0<\infty$, and hence (iv) is satisfied. It then remains to verify (iii); however (iii) can be readily verified once we obtain the explicit formula for $\mathbf{x}^*(s)$ in Step 2.

{Step 2:} 
For $\mathbf{x}^*(s)=(m(s),\ell(s))^\top \in \mathcal{K}$, \eqref{eqn:Xstdef} simplifies to
\begin{align*}
       \mathbf{x}^*(0)  = \begin{bmatrix} m_0 \\ \ell_0 \end{bmatrix} 
       \qquad \text{and} \qquad
       \frac{\partial}{\partial s}\mathbf{x}^*(s) = \mathbf{f}\left(\begin{bmatrix}
           m(s) \\
           \ell(s) \\
         \end{bmatrix}\right) &= \begin{bmatrix}
           p-q \\
           pf-qr-q(1-r)\frac{\ell(s)}{m(s)} \\
         \end{bmatrix}
\end{align*}
We immediately obtain
\[
m(s) = m_0 + (p-q)s.
\]
For $\ell(s)$, substituting the expression for $m(s)$, we have
\begin{align*}
\frac{d}{ds} \ell(s) = pf - qr -q(1-r) \frac{\ell(s)}{m_0 + (p-q)s},
\end{align*}
which implies 
\[
\frac{{\rm d}}{{\rm d}s}\ell(s) + \ell(s) \frac{q(1-r)}{m_0+(p-q)s} = pf - qr.
\]
This is a first-order non-homogeneous linear differential equation of the form
\[
\ell'(s) + \ell(s) a(s) = b(s).
\]
A solution can be written as
\[
\ell(s) = e^{A(s)}(C+v(s)),
\]
where $A(s)= -\int_0^s a(\tau) {\rm d}\tau$ and $v'(s) = e^{-A(s)}b(s)$. Excluding the elementary but lengthy arguments required to compute $A(s)$, $v(s)$, and $C$ we arrive at the solution 
\[
\ell(s) = \ell_0 \left( \frac{m_0}{m_0 + (p-q)s} \right)^\alpha + \left( \frac{f -f^\dagger_c}{1-f^\dagger_c} \right) \left( m_0 + (p-q)s - \frac{m_0^{\alpha+1}}{(m_0 +(p-q)s)^\alpha}\right) ,
\]
where $\alpha = \frac{q(1-r)}{p-q}$.
\end{proof}

The convergence in Lemma \ref{lem:fluid} is illustrated in Figure \ref{fig:scaled_prcess_ode}. Observe that if $m_0, \ell_0 \downarrow 0$ then $m(1) \to (p-q)$ and $\ell(1) \to (p-q)\left( \frac{f-f^\dagger_c}{1-f^\dagger_c} \right)$. We observe that the latter corresponds to the right-hand-side of \eqref{eq:thm32gl}. We now use this insight to establish Theorem \ref{theorem:RPD_Uniformity}.

\begin{figure}[h!]
    \centering
    \makebox[\textwidth]{%
        \includegraphics[width=1\textwidth]{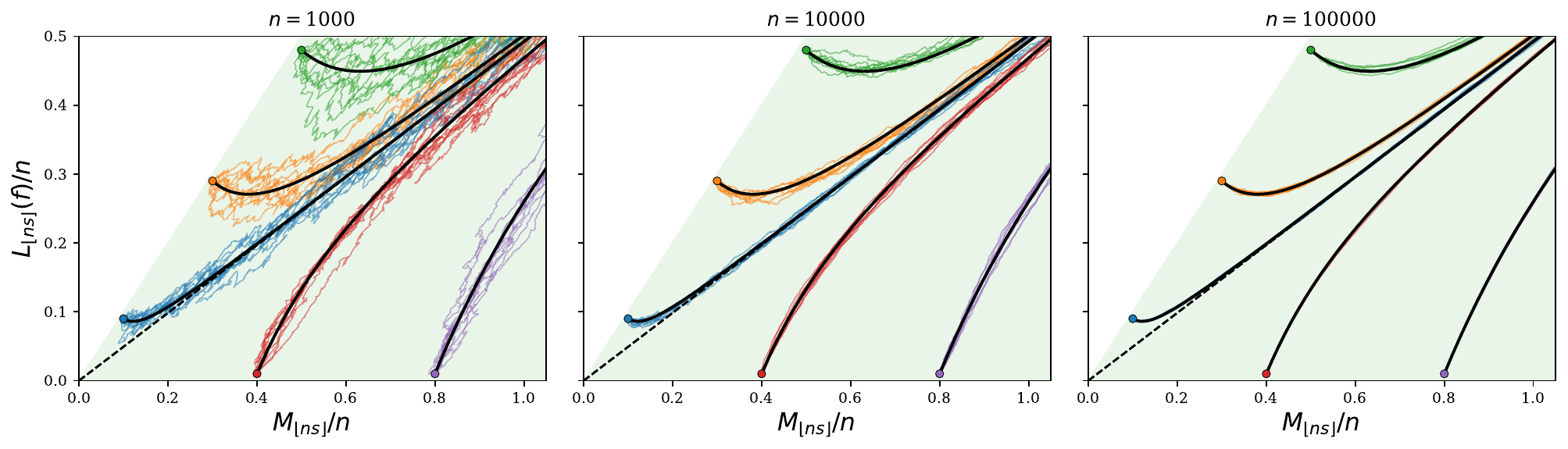}
    }
    \caption{The paths of 10 outcomes of $(\mathbf{X}^n(s))_{s\geq 0}$ for five different initial values $(m_0,\ell_0)$ (colours) with $n=10^3$ (left), $n=10^4$ (centre), and $n=10^5$ (right) when $p=0.55$ and $r=0.5$. The black solid curves represent the fluid limit $(\mathbf{x}(s))_{s\geq 0}$ and the black dashed line displays the line $\ell=\left( \frac{f-f^\dagger_c}{1-f^\dagger_c}\right)m$.}
    \label{fig:scaled_prcess_ode}
\end{figure}

\begin{proof}[Proof of Theorem \ref{theorem:RPD_Uniformity}]
By Lemma \ref{lem:0eps}, for any $\varepsilon, \gamma \in (0,1)$ there exists $\eta(\varepsilon,\gamma)>0$ such that 
\[
\mathbb{P}\left(\max_{0 \leq t \leq \varepsilon} \frac{L_{\lfloor t n \rfloor}(f)}{n} > \eta( \varepsilon,\gamma) \right) \geq 1-\gamma
\]
for $n$ sufficiently large. Let 
\[
\tau(\eta) := \inf \left\{t \in [0, \varepsilon]: \frac{L_{\lfloor t n \rfloor}(f)}{n} > \eta( \varepsilon,\gamma) \right\} \vee \epsilon
\]
By Lemma \ref{lemma:finding_transient_c}, $N_{\lfloor n \tau(\eta) \rfloor}/n \to (p-q)\tau(\eta)$ almost surely.
By the strong Markov property, given $\tau(\eta)$ such that $\tau(\eta)\leq\varepsilon$, we can apply Lemma \ref{lem:fluid} to obtain
\begin{equation}
\begin{aligned}\label{eq:prthm32}
\frac{L_n(f)}{n} &\stackrel{\mathbb{P}}{\to} \eta(\varepsilon, \gamma)\left( \frac{(p-q)\tau(\eta)}{(p-q)\tau(\eta) + (p-q)(1-\tau(\eta))}\right)^{\alpha} \\
&\quad+ \left( \frac{f-f^\dagger_c}{1-f^\dagger_c} \right) \left( (p-q)\tau(\eta) + (p-q)(1-\tau(\eta) ) - \frac{((p-q)\tau(\eta))^{\alpha+1}}{((p-q)\tau(\eta) + (p-q)(1-\tau(\eta) ))^\alpha} \right),
\end{aligned}
\end{equation}
where we set $s=1-\tau(\eta)$, $m_0=(p-q)\tau(\eta)$, and $\ell_0=\eta(\varepsilon,\gamma)$.
Because this holds for any $\gamma,\varepsilon>0$ we then take $\gamma, \varepsilon \downarrow 0$. Noting that since $\tau(\eta) \in [0,\varepsilon]$ this implies $\tau(\eta) \downarrow 0$. Furthermore, the event $\tau(\eta)<\varepsilon$ which we conditioned on in \eqref{eq:prthm32} occurs with probability at least $1-\gamma \uparrow 1$. Taking the limit as  $\gamma, \; \varepsilon\downarrow 0$ in \eqref{eq:prthm32} we then obtain
\[
\frac{L_n(f)}{n} \stackrel{\mathbb{P}}{\to} \left( \frac{f-f^\dagger_c}{1-f^\dagger_c} \right)(p-q)
\]
and hence the result.
\end{proof}

\begin{proof}[Proof of Theorem \ref{theorem:RPD_L_n} (iii)]
Consider the Markov chain $\{\mathbf{X}^{n}(i): i \in \mathbb{N}_0 \}$. If we suppose that $(m_0,\ell_0)^\top= \mathbf{X}^n(i-1)$ and let $\mathbf{x}(1)$ be characterised by \eqref{eq:Xdeflem}, then we can apply Lemma \ref{lem:fluid} (see also the proof of Theorem \ref{theorem:RPD_Uniformity} where we deal with the boundary case $\ell_0=0$) to show that for any $\delta,\varepsilon>0$ there exists $n$ sufficiently large such that 
\[
\mathbb{P}(\lVert \mathbf{X}^n(i) - \mathbf{x}(1) \rVert > \delta)<\varepsilon.
\]
From \eqref{eq:Xdeflem}, we can further observe that ${x}_2(1)$ is increasing in $\ell_0$, and that if $\ell_0 \leq \left( \frac{f-f^\dagger_c}{1-f^\dagger_c} \right) m_0$ then $x_2(1)-x_2(0) \geq \frac{f-f^\dagger_c}{1-f^\dagger_c}$. The latter can be observed visually in Figure \ref{fig:scaled_prcess_ode}, where the dashed line has gradient $\frac{f-f^\dagger_c}{1-f^\dagger_c}$ and the curves that start below this line grow faster than $\frac{f-f^\dagger_c}{1-f^\dagger_c}$. Consequently, applying the Markov property, for $n$ sufficiently large, $\{X^n_2(i): i \in \mathbb{N}_0\}$ stochastically dominates $S_i = \sum_{j=1}^i \xi_i - \inf_{k \leq i}\left\{  \sum_{j=1}^k \xi_i\right\}$ with
\[
\xi_i =
\begin{cases}
\frac{f-f^\dagger_c}{1-f^\dagger_c}-\delta, \quad &\text{with probability } 1- \varepsilon \\
-1 ,&\text{with probability } \varepsilon.
\end{cases}
\]
Here the $-1$ in the second row refers to the fact that at most one species can die each time-step and hence at most $n$ species can die in $n$ time steps.
Noting that $\frac{f-f^\dagger_c}{1-f^\dagger_c}>0$, we can take $\epsilon$ and $\delta$ sufficiently small (and therefore $n$ sufficiently large), to obtain $\mathbb{E}(\xi_i)>0$ making $\{S_i : i \in \mathbb{N}_0\}$ transient.  Because $\{ X^n_2(i): i \in \mathbb{N}_0 \}$ is dominated from below by a transient random walk, it is therefore also transient. The result then follows by noting that since $L_n(f)$ can only decrease by 1 at each time step, which further implies that $\{ L_n(f) : n \in \mathbb{N}_0\}$ cannot only hit 0 finitely often. 
\end{proof}

\begin{proof}[Proof of Theorem \ref{theorem:RPD_survival}]
Suppose a species with fitness $f$ is born at time $b \in \mathbb{N}_0$. Let $S_n$ be the event that this species is alive at time $n > b$, and let $E_n$ be the event that it is killed at time $n$ (given that it has survived until time $n-1$). 
Note that
\[
\mathbb{P}(E_n \mid S_{n-1},\dots,S_{b}) =\mathbb{P}(E_n \mid S_{n-1})
\geq q(1-r)\frac{1}{M_n},
\]
because in the “random-death’’ event (probability $q(1-r)$) the species is chosen uniformly from the $M_n$ individuals alive at that time. Since $M_0=0$ and the population increases by at most one per time-step, we have $M_n\le n$.
Thus, for $r<1$,
\begin{equation}\label{eqn:rpdsurb}
\mathbb{P}(E_n \mid S_{n-1},\dots,S_{b+1}) =\mathbb{P}(E_n \mid S_{n-1})
\ge \frac{q(1-r)}{n}>0.
\end{equation}
The survival probability up to time $n^*>b$ is
\[
\mathbb{P}(S_{b+1}\cap\dots\cap S_{n^*})
= \prod_{n=b+1}^{n^*} \mathbb{P}(S_n \mid S_{b+1},\dots,S_{n-1}).
\]
Using $\mathbb{P}(S_n \mid \cdot) = 1 - \mathbb{P}(E_n \mid \cdot)$ and the bound in \eqref{eqn:rpdsurb} we obtain
\[
\mathbb{P}(S_{b+1}\cap\dots\cap S_{n^*})
\;\le\;
\prod_{n=b+1}^{n^*} \left(1 - \frac{q(1-r)}{n}\right).
\]
It is well known that
$
\prod_{n=b+1}^\infty \left(1 - \frac{c}{n}\right) = 0
$
for any $c>0$ and $b \in \mathbb{N}_0$.
Therefore,
\[
\mathbb{P}(S_{b+1}\cap S_{b+2}\cap\dots ) = 0,
\]
meaning that the species with fitness $f$ dies almost surely.  
Since this holds for any species fitness $f \in [0,1]$ and birth-time $b \in \mathbb{N}_0$, when $r<1$ the probability that any species survives forever is zero.
\end{proof}

\subsection{Proofs for the RLD process}

\begin{proof}[Proof of Theorem \ref{theorem:RLD_rec}]
Recall the construction of the RLD process in Section \ref{sec:RLDdef}. Using the same random variables as defined in that section (i.e., on the same probability space) we construct the following related process which is equivalent to the RPD process except that, if a species dies and it is a random kill (i.e., with probability $q(1-r)$), then if $U_n \in [0,f]$ kill the least fit species, and otherwise if $U_n \in (f,1]$ kill no species. Letting $L^*_n(f)$ denote the number of species with fitness in $[0,f]$ in this process (and $L_n(f)$ the number in the RLD process), following a similar line of reasoning as in the proof of Theorem \ref{theorem:RPD_L_n} (i), we can verify that $L_n(f) \leq L^*_n(f)$ for all $n$ surely. Moreover, $\{ L^*_n(f): n \in \mathbb{N}_0\}$ is a 1-D birth death Markov process whose transition probabilities correspond to those of the GMS with modified probabilities; specifically, with reference to Figure \ref{fig:|L_n(f)|_gms}, the process has the same step-up probability $pf$, it has a different step down probability $q(r + f(1-r))$ (in place of $q$ in Figure \ref{fig:|L_n(f)|_gms}), and it has a different remain in place probability $1-pf-q(r + f(1-r))$ (in place of $p(1-f)$ in Figure \ref{fig:|L_n(f)|_gms}). The Markov chain $\{ L^*_n(f): n \in \mathbb{N}_0\}$ is hence recurrent if and only if 
\[
pf \geq q(r + f(1-r))
\]
which is equivalent to 
\[
f \leq  \frac{qr}{p-q(1-r)} = f^\ddagger_c.
\]
The fact that when $f<f^\ddagger_c$ we have $E(B_{i+1}-B_i) \leq f^\ddagger_c/(f^\ddagger_c-f)$ then follows from the same arguments applied in the proof of Theorem \ref{theorem:RPD_L_n}(i).
\end{proof}

\begin{proof}[Proof of Theorem \ref{theorem:RLD_survival}]
Suppose a species with fitness $f \in [0,1)$ exists in the population and $r < 1$. 
We first note that the RLD process can be equivalently generated by the following dynamics. At each time step $n \geq 1$: independently of everything else generate $U_n \sim \text{Uniform}(0,1)$ and $E_n$ where $E_n$ is a random variable on $\{B,D,L\}$ with $\mathbb{P}(E_n=B)=p$, $\mathbb{P}(E_n=L)=rq$, and $\mathbb{P}(E_n=R)=(1-r)q$;
\begin{itemize}
\item if $E_n=B$ then a species is born with fitness $U_n$
\item if $E_n=L$ the least fit species is killed 
\item if $E_n=R$ then the species with the highest fitness less than $U_n$ is killed (if no such species exists kill the least fit).
\end{itemize}
Let $\mathcal{N} = \{ n \in \mathbb{N}: U_n \geq f\}$ and note that by the Borel-Cantelli lemma $|\mathcal{N}| =\infty$ almost surely. Consider the set of record minima times of $U_n$ given $U_n\geq f$, that is, 
\[
\mathcal{RM}:=\{ n \in \mathcal{N} : U_n < U_k \text{ for all } k \in \mathcal{N}  \text{ with } k < h\}.
\]
Using the fact that the $(U_n)_{n\in\mathbb{N}}$ are independent and applying Rényi's theorem on record values \cite{Renyi1962} we conclude that $|\mathcal{RM}|=\infty$ almost surely. Since $E_n$ and $U_n$ are independent, for each $n \in \mathcal{R}$, we have $\mathbb{P}(E_n=R)=(1-r)q>0$. Using the independence of the $(E_n)_{n \in \mathbb{N}}$, with each other and everything else, we conclude there almost surely exists $n \in \mathcal{RM}$ with $E_n=R$. Note that if the species with fitness $f$ is still alive at time step $n-1$, then if $n \in \mathcal{RM}$ and $E_n =R$ then the species with fitness $f$ must be killed at time step $n$. Hence any arbitrary species with fitness $f \in [0,1)$ is killed almost surely.
\end{proof}

\begin{proof}[Proof of Proposition \ref{prop:grp}]
Wlog we assume that at time 0 there are $k$ species with fitness strictly less than $1$ and one species with fitness 1. Observe that the species with fitness 1 is eventually killed if and only if there exists $n^* \in \mathbb{N}$ such that $M_{n^*}=0$. Since $\{M_n: n \in \mathbb{N}_0\}$ is a random walk with a reflecting barrier at zero that increases by 1 with probability $p$ and decreases by 1 with probability $q$, this corresponds to the well known gambler's ruin probability, leading to the result.
\end{proof}


\begin{thebibliography}{99}

\bibitem{BakSneppen1993}
P. Bak, K. Sneppen. Punctuated equilibrium and criticality in a simple model of evolution.
\emph{Physics Review Letters} 71 (1993) 4083–4086.

\bibitem{B13}
I. Ben-Ari. An empirical process interpretation of a model of species survival. \emph{Stochastic Processes and their Applications} 123 (2013) 475--489.

\bibitem{BMR11} I. Ben-Ari, A. Matzavinos, A. Roitershtein. On a species survival model. \emph{Electronic Communications in Probability} 16 (2011) 226-233.

\bibitem
{Quasispecies Model} I. Ben-Ari, R.B. Schinazi.
A stochastic model for the evolution of a quasispecies.
\emph{Journal of Statistical Physics} 162 (2016) 415–425.

\bibitem{Actual Process} D. Bertacchi, J. Lember, F. Zucca. A stochastic model for the evolution of species with random fitness. \emph{Electronic Communications in Probability} 23 (2018) 1–13. 

\bibitem{Influenza Process} J. T. Cox, R. B. Schinazi.
A stochastic model for the evolution of the influenza virus. \emph{Markov Processes and Related Fields} 20 (2014) 155-166. 

\bibitem{EventBasedExtinction}
L.R. Fontes, F.S. Marques, C. Grejo.
An evolution model with event-based extinction.
\emph{Journal of Physics A: Mathematical and Theoretical} 53 (2020) 19.

\bibitem{StrongestFitness}
C. Grejo, F.P. Machado, A. Roldan-Correa.
 The fitness of the strongest individual in the subcritical GMS model. 
\emph{Electronic Communications in Probability} 21 (2016).

\bibitem{Simple Process} H. Guiol, F.P. Machado, R.B. Schinazi.
A stochastic model of evolution. \emph{Markov Processes and Related Fields} 17 (2011) 253-258.

\bibitem{Bessel Link} H. Guiol, F.P. Machado, R.B. Schinazi. On a link between a species survival time in an evolution model and the Bessel distributions.
\emph{Brazilian Journal of Probability and Statistics} 27 (2013) 201–209.

\bibitem
{FluidLimitKurtz}
T.G. Kurtz. Solutions of ordinary differential equations as limits of pure jump Markov processes.
\emph{Journal of Applied Probability} 7 (1970) 49--58.

\bibitem{L85}
T.M. Liggett. 
Interacting particle systems. \emph{Springer} (1985).

\bibitem
{Phylogenetic Trees} T.M. Liggett, R.B. Schinazi.
A stochastic model for phylogenetic trees.
\emph{Journal of Applied Probability} 46 (2009) 601–607.

\bibitem{MZ03}
R. Meester, D. Znamenski. Limit behavior of the Bak--Sneppen evolution model. \emph{The Annals of Probability} 31 (2003) 1986-2002.

\bibitem{MS12}
S. Michael, S.E. Volkov. 
\newblock On the generalization of the GMS evolutionary model. 
\emph{ Markov Processes and Related Fields} 18 (2012) 311--322.

\bibitem
{Paleontological Extinction}
D.M. Raup. Extinction from a paleontological perspective.
\emph{European Review} 1 (1993) 207--216.

\bibitem{Renyi1962}
A. R\'enyi.
Th\'eorie des \'el\'ements saillants d'une suite d'observations. 
\emph{Annales scientifiques de l'Université de Clermont. Mathématiques} 
8 (1962) 7--13. 

\bibitem
{Subspecies Model} R. Roy, H. Tanemura.
On a model of evolution of subspecies.
\emph{Journal of Mathematical Biology} 90 (2025) 3


\end{thebibliography}
\end{document}